\newcommand{\ZZ}{\mathbb{Z}}
\newtheorem{remark}{Remark}
\newtheorem{lemma}[remark]{Lemma}
\newtheorem{theorem}[remark]{Theorem}
\newtheorem{proposition}[remark]{Proposition}
\newtheorem{corollary}[remark]{Corollary}
\newtheorem{claim}[remark]{Claim}
\newtheorem{example}[remark]{Example}
\title{Uniquely identifying the edges of a graph: the edge metric dimension}
\author{Aleksander Kelenc$^{(1)}$, Niko Tratnik$^{(1)}$ and Ismael G. Yero$^{(2)}$\\
    \\
$^{(1)}${\small Faculty of Natural Sciences and Mathematics}\\
{\small University of Maribor,}  {\small Koro\v{s}ka 160, 2000 Maribor, Slovenia.} \\
{\small niko.tratnik\@@gmail.com}, {\small Aleksander.Kelenc\@@um.si}\\
$^{(2)}${\small Departamento de Matem\'aticas, Escuela Polit\'ecnica Superior de Algeciras}\\
{\small Universidad de C\'adiz,} {\small
Av. Ram\'on Puyol s/n, 11202 Algeciras, Spain.} \\ {\small
ismael.gonzalez\@@uca.es}
}
\begin{document}

\maketitle

\begin{abstract}
Let $G=(V,E)$ be a connected graph, let $v\in V$ be a vertex and let $e=uw\in E$ be an edge. The distance between the vertex $v$ and the edge $e$ is given by $d_G(e,v)=\min\{d_G(u,v),d_G(w,v)\}$. A vertex $w\in V$ distinguishes two edges $e_1,e_2\in E$ if $d_G(w,e_1)\ne d_G(w,e_2)$. A set $S$ of vertices in a connected graph $G$ is an edge metric generator for $G$ if every two edges of $G$ are distinguished by some vertex of $S$. The smallest cardinality of an edge metric generator for $G$ is called the edge metric dimension and is denoted by $edim(G)$. In this article we introduce the concept of edge metric dimension and initiate the study of its mathematical properties. We make a comparison between the edge metric dimension and the standard metric dimension of graphs while presenting some realization results concerning the edge metric dimension and the standard metric dimension of graphs. We prove that computing the edge metric dimension of connected graphs is NP-hard and give some approximation results. Moreover, we present some bounds and closed formulae for the edge metric dimension of several classes of graphs.
\end{abstract}

{\it Keywords:} edge metric dimension; edge metric generator; metric dimension.

{\it AMS Subject Classification Numbers:}   05C12; 05C76; 05C90.

\section{Introduction}

A {\em generator} of a metric space is a set $S$ of points in the space with the property that every point of the space is uniquely determined by its distances from the elements of $S$. Nowadays there exist some different kinds of metric generators in graphs, each one of them studied in theoretical and applied ways, according to its popularity or to its applications. Nevertheless, there exist quite a lot of other points of view which are still not completely taken into account while describing a graph throughout these metric generators. In this investigation we introduce and study a new style of metric generators in order to contribute to the knowledge on these distance-related parameters in graphs.

Given a simple and connected graph $G=(V,E)$, consider the metric $d_G:V\times V\rightarrow \mathbb{R}^+$, where $d_G(x,y)$ is the length of a shortest path between $x$ and $y$.  A vertex $v\in V$ is said to \emph{distinguish}\footnote{Throughout the article, we also use the terms ``recognize'' or ``determine'' instead of ``distinguish'' to describe the same property.} two vertices $x$ and $y$, if $d_G(v,x)\ne d_G(v,y)$. Also, the set $S\subset V$ is said to be a \emph{metric generator} for $G$ if any pair of vertices of $G$ is distinguished by some element of $S$. A minimum generator is called a \emph{metric basis}, and its cardinality the \emph{metric dimension} of $G$, denoted by $dim(G)$. This is the basic or standard case of metric generators in graphs and, at this moment, one of the most commonly in the literature.

This primary concept of metric dimension was introduced by Slater in \cite{leaves-trees}, where the metric generators were called \emph{locating sets}, in connection with the problem of uniquely recognizing the location of an intruder in a network. Also, the concept of metric dimension of a graph was introduced independently by Harary and Melter in \cite{harary}, where metric generators were called \emph{resolving sets}. Several applications of this invariant to the navigation of robots in networks are discussed in \cite{landmarks} and applications to chemistry in \cite{chartrand,chartrand1,pharmacy1}. Furthermore, this topic has some applications to problems of pattern recognition and image processing, some of which involve the use of hierarchical data structures \cite{Melter1984}. Some interesting connections between metric generators in graphs and the Mastermind game or coin weighing have been presented in \cite{Caceres2007}. Moreover, we refer the reader to the work \cite{Bailey2011a}, where it can be found some historical evolution, nonstandard terminologies and more references on this topic.

On the other hand, in order to discuss different points of view of metric generators, several authors have introduced other variations of metric generators. For instance, resolving dominating sets \cite{brigham}, independent resolving sets \cite{chartrand3}, local metric sets \cite{LocalMetric}, strong resolving sets \cite{Oellermann}, resolving partitions \cite{chartrand2}, strong resolving partitions \cite{GonzalezYero2013}, etc. have been presented and studied. A few other very interesting articles concerning metric dimension of graphs can be be found in the literature. However, according to the amount of results on this topic, we prefer to cite only those papers which are important from our point of view.

A metric basis $S$ of a connected graph $G$ uniquely identifies all the vertices of $G$ by mean of distance vectors. One could think that also the edges of the graph are also identified by $S$ with respect to distances to $S$. However, this is further away from reality. For instance, Figure \ref{example1}
shows an example of a graph, where no metric basis uniquely recognizes all the edges of the graph. We observe that the graph $G$ of the Figure \ref{example1} satisfies that $dim(G)=2$ and the whole set of metric bases are the following ones: $\{1,3\}$, $\{7,9\}$, $\{7,11\}$, $\{7,13\}$, $\{9,10\}$, $\{9,12\}$, $\{10,11\}$, $\{10,13\}$, $\{11,12\}$ and $\{12,13\}$. But, for each one of these metric bases, there exists at least a pair of edges which is not distinguished by the corresponding basis.

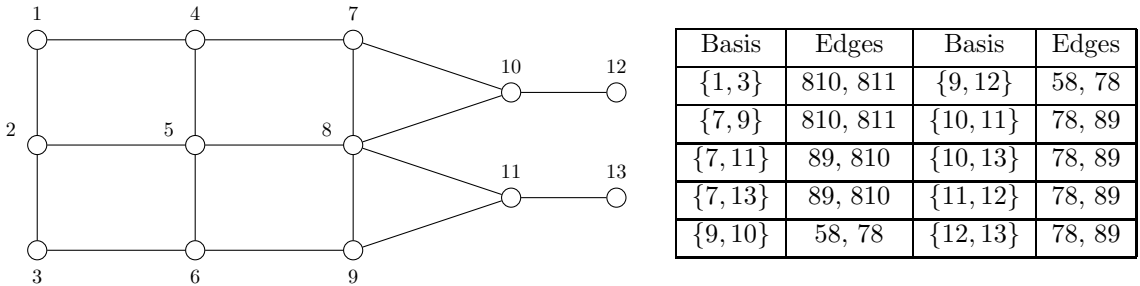
\begin{figure}[h]
\begin{minipage}[b]{.4\linewidth}
\centering
\begin{tikzpicture}[scale=.7, transform shape]
\node [draw, shape=circle] (a3) at  (0,0) {};
\node [draw, shape=circle] (a2) at  (0,2) {};
\node [draw, shape=circle] (a1) at  (0,4) {};
\node [draw, shape=circle] (a6) at  (3,0) {};
\node [draw, shape=circle] (a5) at  (3,2) {};
\node [draw, shape=circle] (a4) at  (3,4) {};
\node [draw, shape=circle] (a9) at  (6,0) {};
\node [draw, shape=circle] (a8) at  (6,2) {};
\node [draw, shape=circle] (a7) at  (6,4) {};
\node [draw, shape=circle] (a11) at  (9,1) {};
\node [draw, shape=circle] (a10) at  (9,3) {};
\node [draw, shape=circle] (a13) at  (11,1) {};
\node [draw, shape=circle] (a12) at  (11,3) {};

\draw(a1)--(a2)--(a3)--(a6)--(a5)--(a4)--(a7)--(a8)--(a9)--(a11)--(a8)--(a10)--(a7);
\draw(a2)--(a5)--(a8);
\draw(a1)--(a4);
\draw(a6)--(a9);
\draw(a10)--(a12);
\draw(a11)--(a13);

\node at (0,-0.5) {$3$ };
\node at (-0.5,2.3) {$2$ };
\node at (0,4.5) {$1$ };
\node at (3,-0.5) {$6$ };
\node at (2.5,2.3) {$5$ };
\node at (3,4.5) {$4$ };
\node at (6,-0.5) {$9$ };
\node at (5.5,2.3) {$8$ };
\node at (6,4.5) {$7$ };
\node at (9,1.5) {$11$ };
\node at (9,3.5) {$10$ };
\node at (11,1.5) {$13$ };
\node at (11,3.5) {$12$ };
\end{tikzpicture}
\par\vspace{0pt}
\end{minipage}
\begin{minipage}[b]{.75\linewidth}
\centering
\small{
\begin{tabular}{|c|c|c|c|}
  \hline
 Basis & Edges & Basis & Edges \\[2pt]  \hline
 $\{1,3\}$ & $810$, $811$ & $\{9,12\}$ & $58$, $78$ \\[2pt] \hline
 $\{7,9\}$ & $810$, $811$ & $\{10,11\}$ & $78$, $89$ \\[2pt]  \hline
 $\{7,11\}$ & $89$, $810$ & $\{10,13\}$ & $78$, $89$ \\[2pt]  \hline
 $\{7,13\}$ & $89$, $810$ & $\{11,12\}$ & $78$, $89$ \\[2pt]  \hline
 $\{9,10\}$ & $58$, $78$ & $\{12,13\}$ & $78$, $89$ \\[2pt]  \hline
\end{tabular}}
\par\vspace{12pt}
\end{minipage}
\caption{A graph where any metric basis does not recognizes all edges and a table with all metric bases and two edges which are not recognized by the corresponding metric basis.}
\label{example1}
\end{figure}

In this sense, a natural question is: Are there some sets of vertices which uniquely identify all the edges of a graph? The answer is, of course, positive, and it is our goal to study such sets in this work. That is, the present research is centered in a new variant of metric generators in graphs, which is oriented to uniquely determine the edges of a graph. Given a connected graph $G=(V,E)$, a vertex $v\in V$ and an edge $e=uw\in E$, the distance between the vertex $v$ and the edge $e$ is defined as $d_G(e,v)=\min\{d_G(u,v),d_G(w,v)\}$. A vertex $w\in V$ \emph{distinguishes} (\emph{recognizes} or \emph{determines}) two edges $e_1,e_2\in E$ if $d_G(w,e_1)\ne d_G(w,e_2)$. A set $S$ of vertices in a connected graph $G$ is an \emph{edge metric generator} for $G$ if every two edges of $G$ are distinguished by some vertex of $S$. The smallest cardinality of an edge metric generator for $G$ is called the \emph{edge metric dimension} and is denoted by $edim(G)$. An \emph{edge metric basis} for $G$ is an edge metric generator for $G$ of cardinality $edim(G)$.

Another useful approach for edge metric generators could be the following one. Given an ordered set of vertices $S=\{s_{1}, s_{2}, \ldots, s_{d}\}$ of a connected graph $G$, for any edge $e$ in $G$, we refer to the $d$-vector (ordered $d$-tuple) $r(e|S)=\left(d_{G}(e,s_{1}),d_{G}(e,s_{2}),\ldots,d_{G}(e,s_{d})\right)$ as the \emph{edge metric representation} of $e$ with respect to $S$. In this sense, $S$ is an edge metric generator for $G$ if and only if for every pair of different edges $e_1,e_2$ of $G$, it follows $r(e_1|S)\neq r(e_2|S)$.

Once defined the concept of edge metric generator, which uniquely determines every edge of the graph, one could think that probably any edge metric generator $S$ is also a standard metric generator, \emph{i.e.} every vertex of the graph is identified by $S$. Again, this is further away from the reality, even so there are several families in which such a fact occurs. We just have to take for instance the hypercube graph $Q_4$, for which is known from \cite{chartrand} that $dim(Q_4)=4$, and we have computed in this work that $edim(Q_4)=3$ (such computation was done by a computer program using an exhaustive search algorithm). According to such facts, we herewith initiate the study of edge metric generators in graph, throughout analyzing several relationships between $dim(G)$ and $edim(G)$ for several classes of graphs $G$. Moreover, we make a complexity analysis of the problem of computing the edge metric dimension of graphs. Finally, we present some bounds on the edge metric dimension of graphs.

\section{Edge metric generators and metric generators}

In this work we introduce the edge metric dimension of a graph and a first natural question concerns with the existence of graphs with predetermined values of such a new parameter. That is for instance, given two integers $r,n$ with $1\le r\le n-1$: Is there a connected graph $G$ of order $n$ such that $edim(G)=r$? The answer of such question is yes and to see this, we proceed in the following way. If $r=n-1$ or $r=1$, then we take the complete graph $K_n$ or the path graph $P_n$, respectively. On the contrary ($2\le r\le n-2$), we can easily check the positive answer by constructing a tree $T_{r,n}$ in the following way. We begin with a star graph $S_{1,r}$. Then we add a path with $n-r-1$ vertices and add an edge between a leaf of the path and the center of the star $S_{1,r}$. It is straightforward to observe that such a tree $T_{r,n}$ has order $n$ and edge metric dimension $r$ (the $r$ leaves of the star form an edge metric generator of $T_{r,n}$).

Since the metric dimension and the edge metric dimension are closely related, another realization result regarding our new parameter is clearly connected with considering them together. That is, given three integers $r,t,n$ with $1\le r,t\le n-1$: Is there a connected graph $G$ of order $n$ such that $dim(G)=r$ and $edim(G)=t$? In contrast with the first realizability question, the answer to this second question seems to be harder to answer. One reason is based on the fact that there is not specifically clear relationship between $dim(G)$ and $edim(G)$ for a graph $G$, as we have stated in the Introduction. Namely, it is possible to find graphs for which the metric dimension equals the edge metric dimension, as well as other graphs $G$ for which  $dim(G) < edim(G)$ or $edim(G) < dim(G)$. It is now our goal to explore such situations by comparing the values of $dim(G)$ and $edim(G)$ for several families of connected graphs and further focus in the realization question stated above.

\subsection{Graphs for which $dim(G)=edim(G)$}\label{dim=edim}

The equality $dim(G)=edim(G)$ is attained for several basic families of graphs. In several cases, obtaining the value of the edge metric dimension of a graph $G$ is quite similar to computing the metric dimension of $G$. In such situation we just state the result without proof. We precisely begin this section with such classes of graphs, namely paths $P_n$, cycles $C_n$ or complete graph $K_n$.

\begin{remark}\label{rem-path-1}
For any integer $n\ge 2$, $edim(P_n)=dim(P_n)=1$, $edim(C_n)=dim(C_n)=2$ and $edim(K_n)=dim(K_n)=n-1$. Moreover, $edim(G)=1$ if and only if $G$ is a path $P_n$.
\end{remark}

If $K_{r,t}$ is a complete bipartite graph different from $K_{1,1}$, then it is known that $dim(K_{r,t}) = r+t-2$. Next we show that the same is true for the edge metric dimension.

\begin{remark}
For any complete bipartite graph $K_{r,t}$ different from $K_{1,1}$, $edim(K_{r,t}) =dim(K_{r,t})$ $ = r+t-2$.
\end{remark}

\begin{proof}
Let $V$ and $U$ be the bipartition sets of $K_{r,t}$. For a first inequality, suppose that $S$ is an edge metric generator without two elements of $V$, \emph{i.e.} there are two vertices $x,y \in V$ such that $x$ and $y$ are not in $S$. Hence, let $u \in U$ and consider the edges $e=ux$ and $f=uy$. It follows that $e$ and $f$ have distance $0$ to $u$ and distance $1$ to every other element in $S$. Therefore, $S$ is not an edge metric basis, a contradiction. We similarly proceed with the set $U$ and it follows that any edge metric generator must contain all but (maybe) one element of every partite set. Hence, $edim(K_{r,t}) \geq r+t-2$.

For the contrary, take $v \in V$, $u \in U$ and let $S = V(K_{r,t}) \setminus \lbrace v, u \rbrace$. It can be easily checked that $S$ is an edge metric generator. Therefore $edim(K_{r,t}) \leq r+t-2$ and the equality follows.
\end{proof}

Another family of graphs with equality on its values for metric dimension and edge metric dimension are the tree graphs. Since we already know that the edge metric dimension of a path is $1$, we only consider trees that are not paths and compute the value of its edge metric dimension. To this end, we need the following terminology from \cite{landmarks}. Let $T=(V,E)$ be a tree and let $v \in V$. Define the equivalence relation $R_v$ in the following way: for every two edges $e,f$ we let $e R_v f$ if and only if there is a path in $T$ including $e$ and $f$ that does not have $v$ as an internal vertex. The subgraphs induced by the edges of the equivalence classes of $E$ are called the bridges of $T$ relative to $v$. Furthermore, for each vertex $v \in V$, the legs at $v$ are the bridges which are paths. We denote by $l_v$ the number of legs at $v$.

We remark that the edge metric dimension for a tree can be computed in linear time. However, the algorithm to obtain an edge metric generator is the same as for the standard metric dimension (see \cite{landmarks}). For the sake of completeness, in the following proof we briefly describe the procedure anyway.

\begin{remark}
Let $T =(V,E)$ be a tree which is not a path. Then
$$edim(T) = dim(T) = \sum_{v \in V, \ l_v > 1} (l_v - 1).$$
\end{remark}

\begin{proof}
Let $v$ be a vertex of $T$ such that $l_v > 1$ and let $S$ be an edge metric generator. Suppose that at least two of the $v$'s legs do not contain an element of $S$. Then the edges incident to $v$ in those legs without an element of $S$ have the same distance to every element of $S$, a contradiction. Therefore, at least $l_v - 1$ legs of $v$ must contain an element of $S$. Since $T$ is not a path, the legs corresponding to different vertices are disjoint and therefore, $edim(T) \geq \sum_{v \in V, \ l_v > 1} (l_v - 1)$. \\

\noindent
For the contrary, we shall construct an edge metric generator $S'$ for an arbitrary tree (which is not a path) in the following way:
\begin{itemize}
\item Compute $l_v$ for each vertex $v$,
\item For every vertex $v$ with $l_v > 1$, put in the set $S'$ all but one of the leaves associated with the legs of $v$.
\end{itemize}

\noindent
Similarly as in \cite{landmarks} we deduce that such $S'$ is an edge metric generator. Therefore, $edim(T) \leq \sum_{v \in V, \ l_v > 1} (l_v - 1)$ and the equality $edim(T) = \sum_{v \in V, \ l_v > 1} (l_v - 1)$ follows. Finally, since the same formula is used to calculate the metric dimension of a tree which is not a path (see \cite{landmarks}), the proof is completed.
\end{proof}

The Cartesian product of two graphs $G$ and $H$ is the graph $G\Box H$, such that $V(G\Box H)=\{(a,b)\;:\;a\in V(G),\;b\in V(H)\}$ and two vertices $(a,b)$ and $(c,d)$ are adjacent in $G\Box H$ if and only if, either
\begin{itemize}
\item $a=c$ and $bd\in E(H)$, or
\item $b=d$ and $ac\in E(G)$.
\end{itemize}
Let $h \in V(H)$. We refer to the set $V(G)\times \{h\}$ as a $G$-layer. Similarly $\{g\} \times V(H)$, $g \in V(G)$ is an $H$-layer. When referring to a specific $G$ or $H$ layer, we denote them by $G^h$ or $^gH$, respectively. Obviously, the subgraph induced by a $G$-layer or by an $H$-layer is isomorphic to $G$ or $H$, respectively. Next we give the value of the edge metric dimension of the grid graph, which is the Cartesian product of two paths $P_r$ and $P_t$ with $r$ and $t$ vertices, respectively.

\begin{proposition}
Let $G$ be the grid graph $G=P_r\Box P_t$, with $r\geq t\geq 2$. Then $edim(G)=dim(G)=2$.
\end{proposition}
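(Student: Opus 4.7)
The lower bound $edim(G)\ge 2$ is immediate from Remark~\ref{rem-path-1}, since $G=P_r\Box P_t$ with $r\ge t\ge 2$ is not a path. The plan is therefore to exhibit an explicit edge metric generator of size $2$.

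I would label the vertices of $G$ by coordinates $(i,j)$ with $1\le i\le r$ and $1\le j\le t$, so that $d_G((i_1,j_1),(i_2,j_2))=|i_1-i_2|+|j_1-j_2|$. As the candidate generator I would pick two adjacent corners on the same $P_r$-layer, namely $S=\{(1,1),(r,1)\}$. The first thing to check is what happens if instead one tried the opposite corners $\{(1,1),(r,t)\}$: a quick computation shows that a horizontal edge $(i,j)(i+1,j)$ and a vertical edge $(i',j')(i',j'+1)$ with $i+j=i'+j'$ receive the same pair of distances from these two corners, so opposite corners fail. This is the warning that tells me adjacent corners are the right choice, and it highlights where the argument is delicate.

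With $S=\{(1,1),(r,1)\}$ I would then compute the edge metric representations in closed form. For a horizontal edge $e=(i,j)(i+1,j)$ I expect
\[
r(e\mid S)=(\,i+j-2,\ r-i+j-2\,),
\]
and for a vertical edge $e=(i,j)(i,j+1)$ I expect
\[
r(e\mid S)=(\,i+j-2,\ r-i+j-1\,),
\]
each verified by reading off the minimum of the two endpoint-distances. Then I would check the three possible coincidences: two horizontal edges with the same representation force $i+j$ and $-i+j$ to coincide and hence give the same edge; the same linear-system argument handles two vertical edges; and if a horizontal and a vertical edge coincide, subtracting the two second coordinates forces $0=1$, which is the key parity obstruction that makes adjacent corners work where opposite corners did not.

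The only obstacle I anticipate is keeping the range of indices straight (horizontal edges require $1\le i\le r-1$, vertical ones $1\le j\le t-1$) and being careful that the ``$\min$'' in the definition of $d_G(e,v)$ is resolved correctly for each endpoint; both are routine once the formulas above are written down. Combining $edim(G)\le 2$ from this construction with the lower bound $edim(G)\ge 2$, and recalling the well-known equality $dim(G)=2$ for grids, the proposition follows.
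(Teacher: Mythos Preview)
Your proposal is correct and follows essentially the same route as the paper: both pick the two adjacent corners $\{(0,0),(r-1,0)\}$ (equivalently your $\{(1,1),(r,1)\}$) and solve the resulting linear system in the coordinates to force $e=f$. Your organization by horizontal versus vertical edges is a cosmetic variant of the paper's argument, and your side remark explaining why opposite corners fail is a nice addition the paper does not include.
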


\begin{proof}
Since $G$ is not a path, by Remark \ref{rem-path-1}, it follows that $edim(G)\geq 2$.
For easily computing distances, let embed $G$ into $\ZZ^2$. Hence, each vertex can be represented as an ordered pair of its coordinates $(x,y)$. We embed $G$ into $\ZZ^2$ such that $(0,0),(r-1,0),(0,t-1), (r-1,t-1)$ are corner vertices of $G$.

\begin{figure}[h]
\centering
\begin{tikzpicture}[scale=.8, transform shape]

\draw [->](0,-1.2)--(0,5.2) node[right]{$y$};
  \draw [->](-1.2,0)--(6.2,0) node[right]{$x$};

\foreach \x/\xtext in {-1/-1, 1/1, 2/2, 3/3, 4/4, 5/5}
{\draw (\x cm,0pt ) -- (\x cm,-3pt ) node[anchor=north] {$\xtext$};}
\foreach \y/\ytext in {-1/-1, 1/1, 2/2, 3/3, 4/4}
{\draw (0pt,\y cm) -- (-3pt ,\y cm) node[anchor=east] {$\ytext$};}

\node [draw, shape=circle, label={[label distance=0cm]225:0}]  (66) at (0,0) {} ;

\foreach \x in {0,...,5}
    \foreach \y in {0,...,4}
       {\pgfmathtruncatemacro{\label}{\x - 5 *  \y +21}
       \node [draw, shape=circle]  (\x\y) at (1*\x,1*\y) {} ;}

  \foreach \x in {0,...,5}{
    \foreach \y in {0,...,3}
      \pgfmathtruncatemacro{\yi}{\y + 1}
      \draw (\x\y)--(\x\yi) ;
    \foreach \y in {0,...,4}
      \pgfmathtruncatemacro{\xi}{\x + 1}
      \ifthenelse{\NOT\x=5}{
      \draw (\x\y)--(\xi\y) ;
      }{};
  }
\end{tikzpicture}
\caption{Embedding of a grid graph $G=P_6 \Box P_5$ into $\ZZ^2$.}
\label{example}
\end{figure}

Let $S$ be the set containing the two vertices $a=(0,0)$ and $b=(r-1,0)$. We shall prove that such $S$ is an edge metric generator for the graph $G$. To this end, we notice that the distance between any two vertices in such representation of $G$ is $d((x_1,y_1),(x_2,y_2))=|x_1-x_2|+|y_1-y_2|$. We assume that each edge is an unordered pair of its endpoints $e=(x_1,y_1)(x_2,y_2)$ and always write such edge considering that $x_1 \leq x_2$ and $y_1 \leq y_2$. This implies that the distances from the edge $e=(x_1,y_1)(x_2,y_2)$ to the vertices $a$ and $b$ are $d(a,e)=x_1+y_1$ and $d(b,e)=r-1-x_2+y_1$, respectively.

Toward a contradiction, suppose that there exist two different edges $e=(x_1,y_1)(x_2,y_2)$ and $f=(w_1,z_1)(w_2,z_2)$ with the same distances to the vertices $a$ and $b$. This implies two equalities:
$$x_1+y_1=w_1+z_1$$
$$ r-1-x_2+y_1 = r-1-w_2+z_1 \iff y_1-z_1=x_2-w_2. $$

Thus, it follows that $x_1+x_2=w_1+w_2$. In both cases $x_1=x_2$ or $x_1=x_2-1$ we get $x_1=w_1$ and $x_2=w_2$.
The equality $x_1=w_1$ together with $x_1+y_1=w_1+z_1$ implies that $y_1=z_1$.
So, we deduce that both $y_2$ and $z_2$ can get values $y_1$ or $y_1+1$. If they get different values, then one of the edges $e$ or $f$ does not represent an edge. We finally get $e=f$, which is a contradiction.

It is already known from \cite{landmarks} that the metric dimension of grid graphs equals two. Thus, we finally get $dim(G)=edim(G)$ and the proof is completed.
\end{proof}

\subsection{Graphs for which $dim(G)<edim(G)$}\label{dim-less-edim}

The \emph{wheel graph} $W_{1,n}$  is the graph obtained from a cycle $C_n$ and the trivial graph $K_1$ by adding all the edges between the vertex of $K_1$ and every vertex of $C_n$. It is known (see \cite{buczkowski}) that

$$dim(W_{1,n})= \left\{ \begin{array}{ll} 3, & n = 3, 6,\\ 2, & n=4,5,  \\ \left\lfloor\frac{2n+2}{5} \right\rfloor, & n \geq 6.  \end{array} \right.$$

\noindent
In the next proposition we consider the edge metric dimension of wheel graphs and observe it is strictly larger than the value for the metric dimension, except in the case $W_{1,3}$.

\begin{proposition}
Let $W_{1,n}$ be a wheel graph. Then
$$edim(W_{1,n})= \left\{ \begin{array}{ll} n, & n = 3, 4,\\ n-1, & n \geq 5.  \end{array} \right.$$
\end{proposition}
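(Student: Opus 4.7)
The plan is to first establish a uniform lower bound $edim(W_{1,n}) \geq n-1$, then handle the small cases $n\in\{3,4\}$ separately, and finally construct an edge metric generator of size $n-1$ when $n\geq 5$. Throughout, let $c$ denote the central vertex and $v_0,v_1,\ldots,v_{n-1}$ denote the cycle vertices (indices mod $n$). For the lower bound, I would observe that for any two distinct spokes $cv_i$ and $cv_j$, every vertex outside $\{v_i,v_j\}$ fails to distinguish them: the center yields $0$ on both sides, and any cycle vertex $v_k$ with $k\notin\{i,j\}$ yields $d(v_k,cv_i)=d(v_k,cv_j)=1$ because $d(v_k,c)=1$. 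Hence an edge metric generator must contain at least one of each pair $\{v_i,v_j\}$, forcing it to include at least $n-1$ of the cycle vertices.

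For $n=3$ we have $W_{1,3}=K_4$, so Remark \ref{rem-path-1} gives $edim(W_{1,3})=3$. For $n=4$, suppose toward contradiction that $|S|=3$. The lower bound forces $S$ to consist of three cycle vertices, and by rotational symmetry we may take $S=\{v_0,v_1,v_2\}$. A direct computation yields $r(cv_0|S)=(0,1,1)=r(v_0v_3|S)$, contradicting the generator property. The upper bound $edim(W_{1,4})\leq 4$ is witnessed by $S=\{v_0,v_1,v_2,v_3\}$: each spoke $cv_i$ carries a unique $0$ at coordinate $i$, and each cycle edge $v_iv_{i+1}$ carries two $0$'s at coordinates $i,i+1$.

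For $n\geq 5$ I propose the generator $S=\{v_0,v_1,\ldots,v_{n-2}\}$. Since $W_{1,n}$ has diameter $2$, every coordinate of every edge representation lies in $\{0,1,2\}$. Spokes $cv_j$ have entries only in $\{0,1\}$ (because $d(v_k,c)=1$ for all $k$), with a unique $0$ at coordinate $j$ if $j\leq n-2$ and no $0$ if $j=n-1$, so the spokes are pairwise distinguished. Each cycle edge $v_iv_{i+1}$ with $i\in\{0,\ldots,n-3\}$ has both endpoints in $S$, hence carries two $0$'s at the distinct coordinates $i,i+1$, distinguishing it from the spokes and from the other such cycle edges. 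The main obstacle is the two remaining cycle edges $v_{n-2}v_{n-1}$ and $v_{n-1}v_0$, each having only one $0$ in its representation and thus potentially colliding with a spoke. The crucial point is that when $n\geq 5$ there exist cycle vertices in $S$ at wheel-distance $2$ from both endpoints of each such edge (for instance $v_1$ for $v_{n-2}v_{n-1}$ and $v_2$ for $v_{n-1}v_0$), so each of these edges has at least one coordinate equal to $2$. Since spokes never attain the value $2$, these two exceptional edges are separated from all spokes, completing the verification. This use of $n\geq 5$ is exactly why the formula changes at $n=4$: in the wheel $W_{1,4}$ no cycle vertex lies at wheel-distance $2$ from both endpoints of either exceptional edge, so the value $2$ never appears and the confusion between a spoke and an exceptional cycle edge cannot be broken.
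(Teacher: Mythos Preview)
Your proposal is correct and follows essentially the same strategy as the paper: the identical lower bound via pairs of spokes, the same generator $S=\{v_0,\ldots,v_{n-2}\}$ for $n\ge 5$, and the same case analysis distinguishing spokes from cycle edges, with your handling of $n\in\{3,4\}$ simply making explicit what the paper calls ``straightforward.'' The only point you leave implicit is that the two exceptional cycle edges $v_{n-2}v_{n-1}$ and $v_{n-1}v_0$ are distinguished from each other (their unique $0$'s sit at different coordinates) and from the two-$0$ cycle edges (by the $0$-count), but this is immediate.
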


\begin{proof}
If $n=3$ or $n=4$, then the proof is straightforward. Let $n \geq 5$ and $V(W_{1,n}) = \lbrace x, g_1, g_2, \ldots, g_n \rbrace$, where the vertex $x$ has degree $n$ and the vertices $g_1, \ldots, g_n$ induce a cycle $C_n$. Set $S = \lbrace g_1, g_2, \ldots, g_{n-1} \rbrace$. We show that $S$ is an edge metric generator. Let $e$ be an edge of $W_{1,n}$. Consider the following cases:
\begin{itemize}
\item If $e = g_ig_{i+1}$ for some $i \in \lbrace 1, \ldots, n-2 \rbrace $, then $e$ has distance $0$ to $g_i$ and $g_j$ and distance $1$ or $2$ to every other vertex in $S$.

\item If $e= g_{n-1}g_n$, then $e$ has distance $0$ to $g_{n-1}$, distance $1$ to $g_1$ and $g_{n-2}$, and distance $2$ to every other vertex in $S$ (and since $n\geq 5$ there is at least one such vertex).

\item If $e=g_ng_1$, then $e$ has distance $0$ to $g_1$, distance $1$ to $g_{n-1}$ and $g_2$, and distance $2$ to every other vertex in $S$ (and since $n\geq 5$ there is at least one such vertex).

\item If $e=xg_i$ for some $i \in \lbrace 1, \ldots, n-1 \rbrace$, then $e$ has distance $0$ to $g_i$ and distance $1$ to every other vertex in $S$.

\item If $e=xg_n$, then $e$ has distance $1$ to every vertex in $S$.
\end{itemize}
Now, it clearly follows from the items above that the edge metric representations of any two distinct vertices of $W_{1,n}$ are different. Thus, $S$ is an edge metric generator and therefore, $edim(W_{1,n}) \leq n-1$.

On the other hand, assume that $S$ is a set of vertices without at least two distinct vertices $g_i,g_j$ of the set $\lbrace g_1, \ldots, g_n \rbrace$. Consider the edges $e = xg_i$ and $f=xg_j$. Notice that $e$ and $f$ have the same distance to every vertex in $S$ and so, $S$ is not an edge metric generator. Therefore, $edim(W_{1,n})\geq n-1$ and we are done.
\end{proof}

Similarly to the wheel graph, the \emph{fan graph} $F_{1,n}$ is the graph obtained from a path $P_n$ and the trivial graph $K_1$ by adding all the edges between the vertex of $K_1$ and every vertex of $P_n$.  \noindent
For the case of fan graphs it is known (see \cite{Caceres2005}) that
$$dim(F_{1,n})= \left\{ \begin{array}{ll} 1, & n = 1,\\ 2, & n=2,3,  \\ 3, & n = 6, \\ \left\lfloor\frac{2n+2}{5} \right\rfloor, & \mbox{otherwise}.  \end{array} \right.$$

By using an analogous procedure as in the case of wheel graphs, we can compute the edge metric dimension for fan graphs, which is again strictly larger than the value for the metric dimension with the exception of $F_{1,n}$ with $n\in \{1,2\}$. We omit the proof since it is quite similar to the one above in wheel graphs.

\begin{proposition}
Let $F_{1,n}$ be a fan graph. Then
$$edim(F_{1,n})=\left\{ \begin{array}{ll}
                                       n, & n = 1,2,3,\\
                                       n-1, & n \geq 4.
                                      \end{array}
                         \right.$$
\end{proposition}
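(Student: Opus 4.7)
The plan is to mimic the structure of the proof for wheel graphs, labelling $V(F_{1,n})=\{x,g_1,\dots,g_n\}$ with $x$ the universal vertex and $g_1g_2\cdots g_n$ the underlying path. The small cases $n=1,2,3$ I would dispose of directly: $F_{1,1}=P_2$ and $F_{1,2}=K_3$ are covered by Remark~\ref{rem-path-1}, while for $F_{1,3}$ a quick enumeration of the (few) 2-subsets of vertices shows that each fails to distinguish some pair of edges, so $edim(F_{1,3})\ge 3$; the reverse inequality is immediate from the general upper bound below (or from the trivial fact that $V\setminus\{x\}$ is a generator).

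For $n\ge 4$ I would prove the lower bound $edim(F_{1,n})\ge n-1$ exactly as in the wheel case: if a vertex set $S$ omits two distinct path vertices $g_i,g_j$, then the spoke edges $e=xg_i$ and $f=xg_j$ both have distance $0$ to $x$ (if $x\in S$) and distance $1$ to every $g_k\in S$ (since in $F_{1,n}$ any two distinct $g_k,g_\ell$ are at distance $\le 2$ through $x$, forcing $\min(d(g_k,g_i),d(g_k,x))=1=\min(d(g_k,g_j),d(g_k,x))$). Thus $e$ and $f$ have identical representations and $S$ is not an edge metric generator; hence $S$ must contain at least $n-1$ of the $g_i$'s.

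For the upper bound, I propose to take $S=\{g_1,\dots,g_{n-1}\}$ and verify it is an edge metric generator by a case analysis on the edge $e$, using the fact that $\mathrm{diam}(F_{1,n})=2$ so all distances from a $g_k$ to any edge lie in $\{0,1,2\}$. Concretely:
\begin{itemize}
\item $e=xg_i$ with $1\le i\le n-1$: the representation has a $0$ in coordinate $i$ and $1$ everywhere else.
\item $e=xg_n$: the representation is the all-$1$ vector.
\item $e=g_ig_{i+1}$ with $1\le i\le n-2$: coordinates $i$ and $i+1$ are $0$, coordinates $i-1$ and $i+2$ (when they exist and lie in $\{1,\dots,n-1\}$) are $1$, the rest are $2$.
\item $e=g_{n-1}g_n$: coordinate $n-1$ is $0$, coordinate $n-2$ is $1$, the rest are $2$.
\end{itemize}
From this I would observe that spoke edges have exactly one $0$ entry and no $2$ entries (uniquely identifying each by the position of the $0$, with $xg_n$ being the only all-$1$ vector), path edges $g_ig_{i+1}$ with $i\le n-2$ carry exactly two $0$ entries in consecutive positions, and $g_{n-1}g_n$ is the unique edge with one $0$ together with at least one $2$ (here the hypothesis $n\ge 4$ is used to guarantee the presence of a $2$). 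These structural differences make all representations pairwise distinct, giving $edim(F_{1,n})\le n-1$ and completing the argument.

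The only mildly delicate step is the last case analysis: one needs to be careful that the entry $g_{n-2}$ (value $1$) together with an actual $2$-entry separates $g_{n-1}g_n$ from the path edges $g_{n-2}g_{n-1}$ and from the spokes $xg_{n-1}$, which is precisely where $n\ge 4$ enters and explains why $n=3$ must be handled by hand.
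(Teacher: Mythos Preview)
Your proposal is correct and follows exactly the approach the paper intends: the paper omits the proof, stating only that it is ``quite similar to the one above in wheel graphs,'' and your argument is precisely that adaptation---the same lower bound via spoke edges $xg_i$, $xg_j$ when two path vertices are omitted, and the same upper bound via $S=\{g_1,\dots,g_{n-1}\}$ with a case analysis on the edge representations. Your handling of the threshold shift from $n\ge 5$ (wheels) to $n\ge 4$ (fans), and the explicit treatment of the small cases, are exactly what is needed to make the analogy precise.
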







\subsection{Graphs for which $edim(G)<dim(G)$}

According to the definition of layers in the Cartesian product of two graphs given in Subsection \ref{dim=edim}, we say that an edge $e\in E(G\Box H)$ is \emph{vertical}, if $e$ lies in a $^gH$-layer for some $g\in V(G)$. Similarly, $e\in E(G\Box H)$ is \emph{horizontal}, if $e$ lies in an $^hG$-layer for some $h\in V(H)$.

The value of the metric dimension of several families of Cartesian product graphs was obtained in \cite{Caceres2007}. For instance, there was proved that
$$dim(C_r\Box C_t)=\left\{\begin{array}{ll}
                           4, & \mbox{if $r \cdot t$ is even}, \\
                           3, & \mbox{otherwise}.
                         \end{array}
\right.$$
Next we show that for some particular cases of the torus graphs $C_r\Box C_t$, it follows that $edim(C_{r}\Box C_{t})<dim(C_{r}\Box C_{t})$.

\begin{theorem}\label{torus-4r-4t}
For any integers $r,t$, $edim(C_{4r}\Box C_{4t})=3$.
\end{theorem}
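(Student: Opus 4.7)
The plan is to prove both $edim(C_{4r}\Box C_{4t}) \ge 3$ and $edim(C_{4r}\Box C_{4t}) \le 3$. I will identify the vertex set with $\mathbb{Z}_{4r} \times \mathbb{Z}_{4t}$, so that $d((i_1,j_1),(i_2,j_2)) = d_{C_{4r}}(i_1,i_2) + d_{C_{4t}}(j_1,j_2)$ and the distance from a vertex $(a,b)$ to a horizontal edge $((i,j),(i+1,j))$ decomposes as a cyclic edge-distance in $C_{4r}$ plus $d_{C_{4t}}(b,j)$ (and symmetrically for vertical edges). This coordinate description is what will make the rest of the proof tractable.

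For the lower bound, Remark~\ref{rem-path-1} already gives $edim \ge 2$ since the torus is not a path. Assuming for contradiction that some $S=\{u_1,u_2\}$ is an edge metric generator, I will use translation invariance to put $S=\{(0,0),(a,b)\}$ with $(a,b)\ne(0,0)$. A short calculation in a single cycle $C_n$ shows that two of its edges with midpoints $m,m'$ are equidistant from a vertex $x$ iff $m+m'\equiv 2x\pmod n$. Using this in each coordinate, the argument splits into three cases: if $a\equiv 0\pmod{2r}$, the two horizontal edges through $(0,0)$ share their distance vector from $S$; if $b\equiv 0\pmod{2t}$, the two vertical edges through $(0,0)$ do; otherwise, one horizontal and one vertical edge at the origin (chosen according to the quadrants in which $a\bmod 4r$ and $b\bmod 4t$ lie) turn out to be equidistant from both members of $S$. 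Each case yields the needed contradiction, giving $edim \ge 3$.

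For the upper bound, I will exhibit an explicit 3-vertex set $S$ and verify directly that it distinguishes every pair of edges. Naive symmetric choices will not do: $\{(0,0),(1,0),(0,1)\}$ is fixed setwise by the swap automorphism $(i,j)\mapsto(j,i)$ when $r=t$ and hence creates confused edge pairs (e.g.\ the pair of edges at $(-1,-1)$), while a choice like $\{(0,0),(1,0),(0,2t)\}$ is stabilized by the reflection $(i,j)\mapsto(i,-j)$. A candidate whose setwise stabilizer in the automorphism group of $C_{4r}\Box C_{4t}$ is trivial for every $r,t\ge 1$ is $S=\{(0,0),(1,0),(2r,1)\}$, and this is the set I would use. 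To verify $S$ works, I will case-split on the type of the pair of edges (both horizontal, both vertical, or one of each), write the three distance entries as sums of cyclic distances, and reduce the equality of distance vectors to a system of modular equations in the four coordinates; the divisibility of $4r$ and $4t$ by $4$ enters precisely to rule out borderline coincidences, so that the only solution is the trivial one.

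The hard part will be the upper bound. It is not enough to pick an $S$ with trivial stabilizer --- one must also rule out accidental coincidences --- and this requires a somewhat delicate analysis, especially for mixed horizontal-versus-vertical pairs, where the two cycles play asymmetric roles in the distance expressions. The cleanest way to organize the verification is to use the remaining translational and reflective symmetries of the torus to move each potential bad pair into a small number of normal-form configurations, and then dispatch those by direct computation.
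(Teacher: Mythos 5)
Your lower-bound plan is fine, though more elaborate than necessary: all four edges incident to a vertex $v$ are at distance $0$ from $v$, and each is at distance $d(u,v)$ or $d(u,v)-1$ from any other vertex $u$, so a pigeonhole on four edges and two values already shows that no $2$-set is an edge metric generator, which is essentially the paper's one-line argument. The genuine gap is in the upper bound, which you yourself call the hard part but never carry out: the entire verification that your set distinguishes all pairs of edges is deferred to a promised ``system of modular equations'' that is never written down or solved. Worse, the one concrete ingredient you do supply is wrong. The set $S=\{(0,0),(1,0),(2r,1)\}$ is \emph{not} an edge metric generator already in the smallest case $r=t=1$, i.e.\ in $C_4\Box C_4$: take the horizontal edges $e=\{(0,1),(1,1)\}$ and $f=\{(2,0),(3,0)\}$. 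Using $d\bigl((i,j),(i',j')\bigr)=d_{C_4}(i,i')+d_{C_4}(j,j')$, each of $e$ and $f$ is at distance $1$ from each of $(0,0)$, $(1,0)$ and $(2,1)$, so $r(e|S)=r(f|S)=(1,1,1)$. Hence no amount of care in the modular bookkeeping can complete your sketch with this $S$, and this also shows that a trivial stabilizer (setwise or pointwise) of $S$ in the automorphism group is far from sufficient --- a caveat you acknowledge but then do not replace with any actual mechanism.

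The missing mechanism is visible in the paper's choice $S=\{(a_0,b_0),(a_0,b_{2t}),(a_r,b_t)\}$, whose first two vertices are \emph{antipodal} in a $C_{4t}$-layer. This structure does real work: for a horizontal edge the two distances to the antipodal pair sum to $2t$ plus twice a cycle distance (an even number), while for a vertical edge they sum to $2t-1$ plus twice a cycle distance (odd), so a parity contradiction disposes of all mixed pairs at once --- precisely the case you flag as delicate and for which your set offers no tool. For two edges of the same type, agreement in the first two coordinates forces the pair into one of three explicit symmetric positions (symmetric about the $^{a_0}C_{4t}$-layer, about the $C_{4r}\,^{b_0}$-layer, or about the vertex $(a_0,b_0)$), and the off-axis third vertex $(a_r,b_t)$ breaks each of these; the hypothesis that both cycle lengths are divisible by $4$ is what places $a_r$ and $b_t$ strictly between the symmetry axes. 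Your proposal uses the divisibility only as a vague promise to ``rule out borderline coincidences.'' To repair your proof you would need a set with this antipodal structure (or a proved substitute for the parity step), at which point you would essentially be reconstructing the paper's argument rather than giving an alternative one.
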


\begin{proof}
We assume that $V(C_{4r})=\{a_0,a_1,\dots,a_{4r-1}\}$ and $V(C_{4t})=\{b_0,b_1,\dots,b_{4t-1}\}$ and for short, let $G=C_{4r}\Box C_{4t}$. From now on, in this proof, all the operations with the subindexes of vertices of $C_{4r}$ and $C_{4t}$ are done modulo $4r$ and $4t$, respectively. Moreover, we assume that $a_ia_{i+1}\in E(C_{4r})$ and $b_jb_{j+1}\in E(C_{4t})$ for every $i\in \{1,\dots,r\}$ and $j\in \{1,\dots,t\}$, respectively. We shall prove that the set $S=\{(a_0,b_0),(a_0,b_{2t}),(a_r,b_t)\}$ is an edge metric generator for $G$. Let $e,f$ be any edges of $G$. We consider the following cases.

\noindent \textbf{Case 1:} $e$ is a horizontal edge and $f$ is a vertical edge.\\
Hence, without loss of generality we assume that the edges $e=(g_1,h)(g_2,h)$ and $f=(g,h_1)(g,h_2)$ satisfy that $g_1$ is closer to $a_0$ than $g_2$ and that $h_1$ is closer to $b_0$ than $h_2$. Thus, we have the following.
$$d_{G}(e,(a_0,b_0))=d_{C_{4t}}(h,b_0)+d_{C_{4r}}(g_1,a_0),$$
$$d_{G}(e,(a_0,b_{2t}))=d_{C_{4t}}(h,b_{2t})+d_{C_{4r}}(g_1,a_0),$$
$$d_{G}(f,(a_0,b_0))=d_{C_{4t}}(h_1,b_0)+d_{C_{4r}}(g,a_0),$$
$$d_{G}(f,(a_0,b_{2t}))=d_{C_{4t}}(h_2,b_{2t})+d_{C_{4r}}(g,a_0).$$
Suppose $d_{G}(e,(a_0,b_0))=d_{G}(f,(a_0,b_0))$ and $d_{G}(e,(a_0,b_{2t}))=d_{G}(f,(a_0,b_{2t}))$. So, from the equalities above we obtain
$$d_{C_{4t}}(h,b_0)+d_{C_{4r}}(g_1,a_0)=d_{C_{4t}}(h_1,b_0)+d_{C_{4r}}(g,a_0)$$ and $$d_{C_{4t}}(h,b_{2t})+d_{C_{4r}}(g_1,a_0)=d_{C_{4t}}(h_2,b_{2t})+d_{C_{4r}}(g,a_0).$$
Since $d_{C_{4t}}(h,b_0)+d_{C_{4t}}(h,b_{2t})=2t$ and $d_{C_{4t}}(h_1,b_0)+d_{C_{4t}}(h_2,b_{2t})=2t-1$, by adding the last two equalities we deduce that
$$2d_{C_{4r}}(g_1,a_0)=2d_{C_{4r}}(g,a_0)-1,$$
which is not possible, since the left side of the equality is an even number and the right side is odd. Thus, we have that $d_{G}(e,(a_0,b_0))\ne d_{G}(f,(a_0,b_0))$ or $d_{G}(e,(a_0,b_{2t}))\ne d_{G}(f,(a_0,b_{2t}))$. Equivalently, $e,f$ are distinguished by $(a_0,b_0)$ or by $(a_0,b_{2t})$.

\noindent \textbf{Case 2:} $e,f$ are vertical edges.\\
Similarly to the case above, without loss of generality, we assume that the edges $e=(x,h_1)(x,h_2)$ and $f=(y,h_3)(y,h_4)$ satisfy that $h_1$ is closer to $b_0$ than $h_2$ and that $h_3$ is closer to $b_0$ than $h_4$. Thus, we have the following.
$$d_{G}(e,(a_0,b_0))=d_{C_{4t}}(h_1,b_0)+d_{C_{4r}}(x,a_0),$$
$$d_{G}(e,(a_0,b_{2t}))=d_{C_{4t}}(h_2,b_{2t})+d_{C_{4r}}(x,a_0),$$
$$d_{G}(f,(a_0,b_0))=d_{C_{4t}}(h_3,b_0)+d_{C_{4r}}(y,a_0),$$
$$d_{G}(f,(a_0,b_{2t}))=d_{C_{4t}}(h_4,b_{2t})+d_{C_{4r}}(y,a_0).$$
Now, assume that $d_{G}(e,(a_0,b_0))=d_{G}(f,(a_0,b_0))$ and $d_{G}(e,(a_0,b_{2t}))=d_{G}(f,(a_0,b_{2t}))$. Thus, the four equalities above lead to
\begin{equation}\label{eq1}
d_{C_{4t}}(h_1,b_0)+d_{C_{4r}}(x,a_0)=d_{C_{4t}}(h_3,b_0)+d_{C_{4r}}(y,a_0),
\end{equation}
\begin{equation}\label{eq2}
d_{C_{4t}}(h_2,b_{2t})+d_{C_{4r}}(x,a_0)=d_{C_{4t}}(h_4,b_{2t})+d_{C_{4r}}(y,a_0).
\end{equation}
By adding these two equalities and by using the fact that $d_{C_{4t}}(h_1,b_0)+d_{C_{4t}}(h_2,b_{2t})=2t-1$ and $d_{C_{4t}}(h_3,b_0)+d_{C_{4t}}(h_4,b_{2t})=2t-1$,  we deduce that
$$d_{C_{4r}}(x,a_0)=d_{C_{4r}}(y,a_0).$$
Moreover, by using the equality above in the equalities (\ref{eq1}) and (\ref{eq2}), it follows that
$$d_{C_{4t}}(h_1,b_0)=d_{C_{4t}}(h_3,b_0),$$
$$d_{C_{4t}}(h_2,b_{2t})=d_{C_{4t}}(h_4,b_{2t}).$$
As a consequence of these three last relationships we notice that any two edges $e,f$ having the same distance to the vertices $(a_0,b_0)$ and $(a_0,b_{2t})$ satisfy one of the following situations:
\begin{itemize}
\item $e,f$ are symmetrical with respect to the $^{a_0} C_{4t}$-layer (see pairs of edges $(e_i,f_i)$, with $i\in \{1,\ldots,4\}$, drawn in Figure \ref{fig-vertical}),
\item $e,f$ are symmetrical with respect to the $C_{4r}\,^{b_0}$-layer or equivalently to the $C_{4r}\,^{b_{2t}}$-layer (see pairs of edges $(e_1,e_4)$, $(e_2,e_3)$, $(f_1,f_4)$, $(f_2,f_3)$, drawn in Figure \ref{fig-vertical}),
\item $e,f$ are symmetrical with respect to the vertex $(a_0,b_0)$ or equivalently to the vertex $(a_0,b_{2t})$ (see pairs of edges $(e_1,f_4)$, $(e_2,f_3)$, $(f_1,e_4)$, $(f_2,e_3)$ drawn in Figure \ref{fig-vertical}).
\end{itemize}

\begin{figure}[ht]
\centering
\begin{tikzpicture}[scale=.5]
  \foreach \x in {0,2,4,6,8,10,12,14,16,18,20,22}
  {
    \foreach \y in {0,2,4,6,8,10,12,14}
    {
      \node at (\x,\y) [draw, shape=circle,scale=.5] {};
    }
  }

  \foreach \x in {0,1,2,3,4,5,6}
  {
   \node at (\x+\x+10,-2) {$a_{\x}$};
  }
  \foreach \x in {7,8,9,10,11}
  {
   \node at (\x+\x-14,-2) {$a_{\x}$};
  }
  \foreach \y in {0,1,2,3,4,5,6,7}
  {
   \node at (-2,\y+\y) {$b_{\y}$};
  }

\draw (6,2.15)--(6,3.85);
   \node at (6.5,3) {$e_4$};
\draw (6,12.15)--(6,13.85);
   \node at (6.5,13) {$e_1$};
\draw (14,2.15)--(14,3.85);
   \node at (14.5,3) {$f_4$};
\draw (14,12.15)--(14,13.85);
   \node at (14.5,13) {$f_1$};
\draw (2,6.15)--(2,7.85);
   \node at (2.5,7) {$e_3$};
\draw (2,8.15)--(2,9.85);
   \node at (2.5,9) {$e_2$};
\draw (18,6.15)--(18,7.85);
   \node at (18.5,7) {$f_3$};
\draw (18,8.15)--(18,9.85);
   \node at (18.5,9) {$f_2$};

\draw (0.15,2)--(1.85,2);
   \node at (1,1.5) {$e_6$};
\draw (0.15,14)--(1.85,14);
   \node at (1,13.5) {$e_5$};
\draw (6.15,6)--(7.85,6);
   \node at (7,5.5) {$e_8$};
\draw (6.15,10)--(7.85,10);
   \node at (7,9.5) {$e_7$};
\draw (18.15,2)--(19.85,2);
   \node at (19,1.5) {$f_6$};
\draw (18.15,14)--(19.85,14);
   \node at (19,13.5) {$f_5$};
\draw (12.15,6)--(13.85,6);
   \node at (13,5.5) {$f_8$};
\draw (12.15,10)--(13.85,10);
   \node at (13,9.5) {$f_7$};

\node at (10,0) [circle, fill=black,scale=.5] {};
\node at (10,8) [circle, fill=black,scale=.5] {};
\node at (16,4) [circle, fill=black,scale=.5] {};
\end{tikzpicture}
\caption{A sketch of the graph $C_{12}\Box C_8$. Only some examples of edges have been drawn. Vertices in bold represent the edge metric generator.}\label{fig-vertical}
\end{figure}
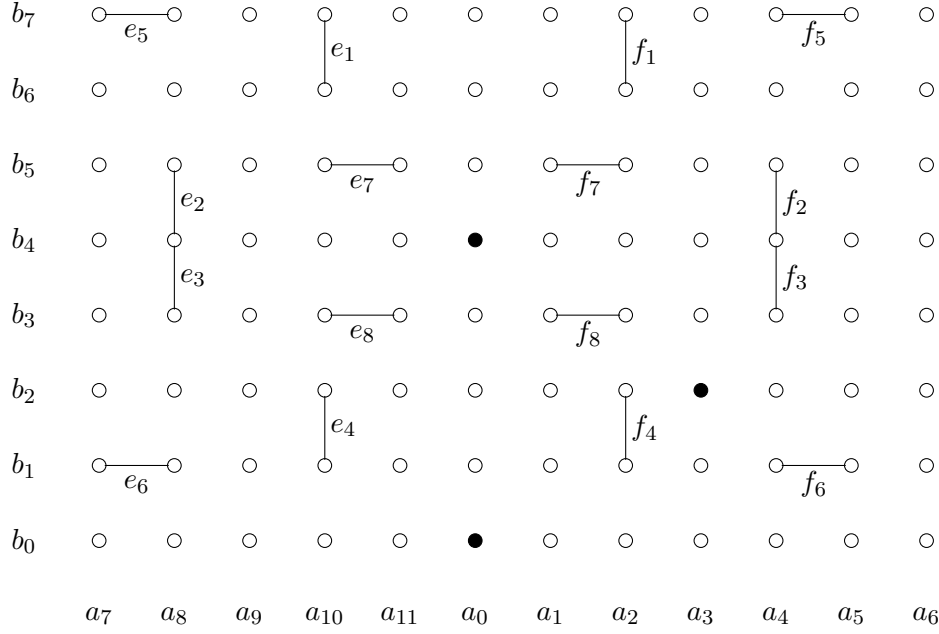

According to these items above and due to the fact that the cycles used to generate the graph $G$ have order $4r$ and $4t$, it is not difficult to notice that if two vertical edges are not distinguished by the vertices $(a_0,b_0)$ and $(a_0,b_{2t})$, then they are distinguished by the vertex $(a_r,b_t)$. For instance, assume that $e,f$ are symmetrical with respect to the $^{a_0} C_{4t}$-layer. So, without loss of generality assume that $e$ lies in a $^{a_i} C_{4t}$-layer with $i\in \{1,\ldots,2r-1\}$. Thus, $f$ lies in a $^{a_j} C_{4t}$-layer with $j\in \{2r+1,\ldots,4r-1\}$ (notice that neither $e$ nor $f$ lie in the $^{a_{2r}} C_{4t}$-layer since in such case $e=f$, which is not possible). Hence, it follows that
\begin{equation}\label{eq5}
d_G(e,(a_r,b_t))=d_G(e,(a_i,b_t))+d_{C_{4r}}(a_i,a_r)
\end{equation}
and
\begin{equation}\label{eq55}
d_G(f,(a_r,b_t))=d_G(f,(a_j,b_t))+d_{C_{4r}}(a_j,a_r).
\end{equation}
Note that $d_G(e,(a_i,b_t))=d_G(f,(a_j,b_t))$, since $e,f$ are symmetrical with respect to the $^{a_0} C_{4t}$-layer. Moreover, it clearly happens that $d_{C_{4r}}(a_i,a_r)<d_{C_{4r}}(a_j,a_r)$, since $d_{C_{4r}}(a_i,a_0)=d_{C_{4r}}(a_j,a_0)$. Thus, equalities given in (\ref{eq5}) and (\ref{eq55}) lead to $d_G(e,(a_r,b_t))\ne d_G(f,(a_r,b_t))$.

\noindent \textbf{Case 3:} $e,f$ are horizontal edges.\\
The procedure in this case is relatively similar to that in Case 2. As such, we assume that the edges $e=(g_1,y)(g_2,y)$ and $f=(g_3,z)(g_4,z)$ satisfy that $g_1$ is closer to $a_0$ than $g_2$ and that $g_3$ is closer to $a_0$ than $g_4$. Thus,
$$d_{G}(e,(a_0,b_0))=d_{C_{4t}}(y,b_0)+d_{C_{4r}}(g_1,a_0),$$
$$d_{G}(e,(a_0,b_{2t}))=d_{C_{4t}}(y,b_{2t})+d_{C_{4r}}(g_1,a_0),$$
$$d_{G}(f,(a_0,b_0))=d_{C_{4t}}(z,b_0)+d_{C_{4r}}(g_3,a_0),$$
$$d_{G}(f,(a_0,b_{2t}))=d_{C_{4t}}(z,b_{2t})+d_{C_{4r}}(g_3,a_0).$$
As before, we assume that $d_{G}(e,(a_0,b_0))=d_{G}(f,(a_0,b_0))$ and $d_{G}(e,(a_0,b_{2t}))=d_{G}(f,(a_0,b_{2t}))$. Thus, the four equalities above lead to
\begin{equation}\label{eq3}
d_{C_{4t}}(y,b_0)+d_{C_{4r}}(g_1,a_0)=d_{C_{4t}}(z,b_0)+d_{C_{4r}}(g_3,a_0),
\end{equation}
\begin{equation}\label{eq4}
d_{C_{4t}}(y,b_{2t})+d_{C_{4r}}(g_1,a_0)=d_{C_{4t}}(z,b_{2t})+d_{C_{4r}}(g_3,a_0).
\end{equation}
By adding these two equalities and by using the fact that $d_{C_{4t}}(y,b_0)+d_{C_{4t}}(y,b_{2t})=2t$ and $d_{C_{4t}}(z,b_0)+d_{C_{4t}}(z,b_{2t})=2t$,  we deduce that
$$d_{C_{4r}}(g_1,a_0)=d_{C_{4r}}(g_3,a_0).$$
Also, by using the equality above in the equalities (\ref{eq3}) and (\ref{eq4}), we have
$$d_{C_{4t}}(y,b_0)=d_{C_{4t}}(z,b_0),$$
$$d_{C_{4t}}(y,b_{2t})=d_{C_{4t}}(z,b_{2t}).$$
Thus, we deduce that for any two edges $e,f$ having the same distance to the vertices $(a_0,b_0)$ and $(a_0,b_{2t})$ one of the following situations is satisfied:
\begin{itemize}
\item $e,f$ are symmetrical with respect to the $^{a_0} C_{4t}$-layer (see pairs of edges $(e_i,f_i)$, with $i\in \{5,\ldots,8\}$, drawn in Figure \ref{fig-vertical}),
\item $e,f$ are symmetrical with respect to the $C_{4r}\,^{b_0}$-layer or equivalently to the $C_{4r}\,^{b_{2t}}$-layer (see pairs of edges $(e_5,e_6)$, $(e_7,e_8)$, $(f_5,f_6)$ and $(f_7,f_8)$  drawn in Figure \ref{fig-vertical}),
\item $e,f$ are symmetrical with respect to the vertex $(a_0,b_0)$ or equivalently to the vertex $(a_0,b_{2t})$ (see pairs of edges $(e_5,f_6)$, $(e_6,f_5)$, $(e_7,f_8)$ and $(e_8,f_7)$ drawn in Figure \ref{fig-vertical}).
\end{itemize}
By using a similar reasoning like in Case 2, we deduce that if two horizontal edges are not distinguished by the vertices $(a_0,b_0)$ and $(a_0,b_{2t})$, then they are distinguished by the vertex $(a_r,b_t)$.

As a consequence of the three cases above we obtain that $S$ is an edge metric generator, which leads to $edim(C_{4r}\Box C_{4t})\le 3$. Now, consider two distinct vertices $(a,b),(c,d)\in V(C_{4r}\Box C_{4t})$. Notice that there are always two incident edges with $(a,b)$ (or with $(c,d)$), such that they are not distinguished by $(a,b)$ nor by $(c,d)$. Therefore, $edim(C_{4r}\Box C_{4t})>2$, which completes the proof.
\end{proof}

\subsection{Realization of the edge metric dimension versus the metric dimension}\label{subsect-realiz}

Since it is possible to find classes of graphs $G$ such that $dim(G)=edim(G)$, $dim(G)<edim(G)$ or $edim(G)<dim(G)$, the realization question stated at the beginning of this section (concerning the triplet $r,t,n$: metric dimension, edge metric dimension and order, respectively) must be dealt with by separating these three possibilities above.

The case $dim(G)=edim(G)$ is realizable by complete or tree graphs for instance. That is, the triplet $n-1,n-1,n$ is realizable by a complete graph $K_n$ and the triplet $r,r,n$ with $1\le r\le n-2$ is realizable by a tree $T$ with $r+1$ leaves obtained from a star $S_{1,n-1}$ by removing $n-1-r$ edges of $S_{1,n-1}$ and subdividing one of the remaining edges with $n-1-r$ vertices. Clearly the order of such $T$ is $n$ and it is straightforward to observe that $dim(T)=edim(T)=r$. Notice that the particular case $r=1$ is given by the path graph $P_n$, which is also obtained as described above.

We next continue with the case $dim(G)<edim(G)$. To this end, we need the following family $\mathcal{F}$ of graphs.
We begin with a star graph $S_{1,b}$, $b\ge 2$, and the graph $G_1=K_1+(\bigcup_{i=1}^aK_2)$, $a\ge 1$, where the operator $(+)$ represents the join graph\footnote{The \emph{join graph} $G+H$ of the graphs $G$ and $H$ is a graph obtained from $G$ and $H$ by adding all the possible edges between a vertex of $G$ and a vertex of $H$.}. Then, to obtain a graph $G_{a,b,c}\in \mathcal{F}$, we choose a path $P_c$ of order $c$ and join by an edge one leaf of $P_c$ with the center of $G_1$, and the other leaf with the center of the star $S_{1,b}$. We shall make the assumption that $c$ could be equal to zero, and in such case the action above (adding the path $P_c$) is understood as adding an edge between the centers of $G_1$ and $S_{1,b}$.  See Figure \ref{family-F} for an example.

\begin{figure}[ht]
\centering
\begin{tikzpicture}[scale=.7, transform shape]

\node [draw, shape=circle] (a1) at (0,1) {};
\node [draw, shape=circle] (a2) at (0,3) {};
\node [draw, shape=circle] (a3) at (1,0) {};
\node [draw, shape=circle] (a4) at (2,2) {};
\node [draw, shape=circle] (a5) at (1,4) {};
\node [draw, shape=circle] (a6) at (3,0) {};
\node [draw, shape=circle] (a7) at (3,4) {};
\node [draw, shape=circle] (a8) at (4,2) {};
\node [draw, shape=circle] (a9) at (6,2) {};
\node [draw, shape=circle] (a10) at (8,2) {};

\node [draw, shape=circle] (b4) at (9,0) {};
\node [draw, shape=circle] (b5) at (9,4) {};
\node [draw, shape=circle] (b6) at (10,2) {};
\node [draw, shape=circle] (b7) at (11,0) {};
\node [draw, shape=circle] (b8) at (11,4) {};
\node [draw, shape=circle] (b9) at (12,1) {};
\node [draw, shape=circle] (b10) at (12,3) {};

\draw(a4)--(a1)--(a2)--(a4)--(a3)--(a6)--(a4)--(a5)--(a7)--(a4)--(a8)--(a9)--(a10);
\draw(a10)--(b6)--(b4);
\draw(b7)--(b6)--(b5);
\draw(b8)--(b6)--(b9);
\draw(b10)--(b6);

\end{tikzpicture}
\caption{The graph $G_{3,6,3}$.}\label{family-F}
\end{figure}
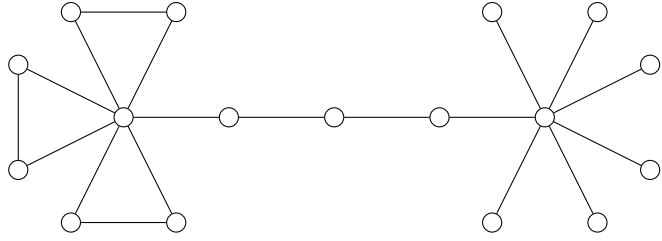

Observe that a graph $G_{a,b,c}\in \mathcal{F}$ has order $2a+b+c+2$. Next we compute $dim(G_{a,b,c})$ and $edim(G_{a,b,c})$ for any $G_{a,b,c}\in \mathcal{F}$.

\begin{remark}\label{graph-F}
Let $G_{a,b,c}\in \mathcal{F}$. Then $dim(G_{a,b,c})=a+b-1$ and $edim(G_{a,b,c})=2a+b-2$.
\end{remark}

\begin{proof}
Let $S$ be a metric basis of $G_{a,b,c}$. Notice that any two distinct vertices of the star $S_{1,b}$ have the same distance to any other vertex of $G_{a,b,c}$. Moreover, any two adjacent vertices of $G_1$ different from the center have the same distance to any other vertex of $G_{a,b,c}$. As a consequence of these two observations, we deduce that $S$ must contain at least $b-1$ vertices of the star $S_{1,b}$ and at least $a$ vertices of $G_1$. Thus $dim(G_{a,b,c})\ge a+b-1$. On the other hand, it is straightforward to observe that a set composed by $b-1$ leaves of the star $S_{1,b}$ and one vertex of each graph $K_2$ used to generate $G_1$ is a metric generator for $G_{a,b,c}$. Therefore $dim(G_{a,b,c})\le a+b-1$ and the first equality follows.

Now, let $S'$ be an edge metric basis of $G_{a,b,c}$. We observe that any two edges joining the center of $G_1$ with any other vertex in $G_1$ have the same distance to every other vertex of $G_{a,b,c}$. Also, any two edges of the star $S_{1,b}$ have the same distance to every other vertex of $G_{a,b,c}$. Thus, we deduce that $S'$ must contains at least $b-1$ vertices of the star $S_{1,b}$ and $2a-1$ vertices of $G_1$. So $edim(G_{a,b,c})\ge 2a+b-2$. It is again straightforward to observe that a set composed by $b-1$ leaves of the star $S_{1,b}$ and all but two vertices of $G_1$ (the center and other extra vertex) is an edge metric generator for $G_{a,b,c}$. Therefore, $edim(G_{a,b,c})\le 2a+b-2$ and the second equality follows.
\end{proof}

By using the family above we partially solve the realization question regarding the triplet order, $dim(G)$, $edim(G)$ whenever $dim(G)<edim(G)$. We first observe that the triplet $1,t,n$ with $t\ge 2$ is not realizable for any graph $G$, since $dim(G)=1$ if and only if $G$ is a path $P_n$ and $edim(P_n)=1$. In our next theorem we consider that $2r\le n-2$, otherwise the theorem would be stated for any $r,t,n$ such that $2\le r\le t\le n-2$.

\begin{theorem}
For any $r,t,n$ such that $2\le r\le t\le 2r\le n-2$, there exists a connected graph $G$ of order $n$ such that $dim(G)=r$ and $edim(G)=t$.
\end{theorem}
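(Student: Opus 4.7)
The plan is to realize the triplet $(r, t, n)$ as a member of the family $\mathcal{F}$ introduced above. By Remark~\ref{graph-F}, a graph $G_{a,b,c} \in \mathcal{F}$ satisfies
\[
|V(G_{a,b,c})| = 2a + b + c + 2, \qquad dim(G_{a,b,c}) = a + b - 1, \qquad edim(G_{a,b,c}) = 2a + b - 2.
\]
Equating these quantities with $n$, $r$, and $t$ respectively produces a linear system with the unique solution $a = t - r + 1$, $b = 2r - t$, $c = n - t - 4$. The proposed graph is therefore $G := G_{a,b,c}$ with these parameter values.

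Next I would verify admissibility of these parameters in $\mathcal{F}$: the condition $a \ge 1$ is immediate from $t \ge r$; the condition $b \ge 2$ reduces to $t \le 2r - 2$; and the condition $c \ge 0$ reduces to $n \ge t + 4$. Assuming these inequalities, the conclusion is a one-line calculation using Remark~\ref{graph-F}, since the order is $2(t-r+1) + (2r-t) + (n-t-4) + 2 = n$, the metric dimension is $(t-r+1) + (2r-t) - 1 = r$, and the edge metric dimension is $2(t-r+1) + (2r-t) - 2 = t$.

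The hypotheses $t \le 2r$ and $n \ge 2r + 2$ of the theorem are slightly weaker than the admissibility conditions above, so the main obstacle I anticipate is handling the resulting boundary situations $t \in \{2r-1,\, 2r\}$ (where $b \in \{0, 1\}$) and $n - t \in \{2, 3\}$ (where $c \in \{-2, -1\}$). For these I would modify the construction minimally---dropping the star entirely when $b = 0$, replacing it by a single pendant edge when $b = 1$, and identifying certain vertices shared between $G_1$, $P_c$ and the star when $c < 0$---and then re-run the two forcing arguments from the proof of Remark~\ref{graph-F} (that any metric generator must contain $b-1$ leaves of the star and $a$ vertices of $G_1$, while any edge metric generator must additionally double up on $G_1$) to confirm that the modified graphs continue to realize the prescribed values of $dim$ and $edim$. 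These verifications should be cosmetic variants of the ones already carried out, so I do not expect any genuinely new difficulty beyond the bookkeeping.
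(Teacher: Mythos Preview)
Your approach via the family $\mathcal{F}$ is exactly what the paper does for the generic range $t\le 2r-2$ and $t\le n-4$, and you correctly isolate the boundary cases $b\in\{0,1\}$ and $c\in\{-1,-2\}$.  However, the modifications you sketch for those boundaries do not work, and the fix is more than cosmetic.

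The problem is the following.  In $G_{a,b,c}$ the path $P_c$ is attached to the center of $G_1$ on one side and to the center of the star on the other; the role of the star leaves in the edge metric generator is precisely to separate the edge $c_1p_1$ (center of $G_1$ to first path vertex) from the one edge $c_1v_0$ whose endpoint $v_0$ is left out of the $2a-1$ chosen $K_2$ vertices.  If you shrink the star to a single pendant ($b=1$) or drop it entirely ($b=0$), the graph becomes $G_1$ with a path hanging off its center, and now $c_1p_1$ and $c_1v_0$ have identical distances to every $K_2$ vertex.  Hence the generator must contain all $2a$ of the $K_2$ vertices, giving $edim=2a$ rather than $2a+b-2$.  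With your parameter values this yields $edim=2r$ when you want $2r-1$ (case $b=1$) and $edim=2r+2$ when you want $2r$ (case $b=0$).

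The paper resolves these cases by a genuinely different device: for $t=2r-1$ it attaches the path to a \emph{non-central} vertex of $K_1+\bigcup_{i=1}^r K_2$, and for $t=2r$ it first adds an isolated $K_1$ to the join (so the center acquires one neighbour of degree~$1$) and attaches the path there.  In both cases the extra path edge is no longer incident to the center, so the forcing argument gives the desired value.  For the $c\in\{-1,-2\}$ cases the paper replaces the star not by identifying vertices but by inserting several isolated $K_1$'s into the join, which again changes the forcing count in a controlled way.  These are small constructions, but they are not the ones you propose, and the verification is not a re-run of Remark~\ref{graph-F}.
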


\begin{proof}
We first deal with the case $t=2r$. Let $G_{r,n}$ be the graph obtained as follows. We begin with the join graph $G'=K_1+(K_1\cup
(\bigcup_{i=1}^rK_2))$. Then we add a path of order $n-2r-2$ and join with an edge one of its leaves with the unique vertex of $G'$ of degree one. See Figure \ref{graphs-realiz} for an example. Clearly, $G_{r,n}$ has order $n-2r-2+2r+2=n$. Also, it is not difficult to see that any metric generator needs $r$ vertices and that any edge metric generator needs $2r=t$ vertices. Thus, it follows $dim(G_{r,n})=r$ and $edim(G_{r,n})=t$. Since $n-2r-2\ge 0$ and $t=2r$ we get that $t\le n-2$ and we are done for this case.

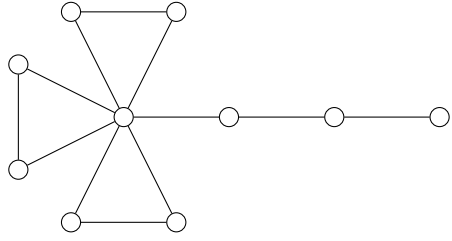
\begin{figure}[ht]
\centering
\begin{tikzpicture}[scale=.7, transform shape]

\node [draw, shape=circle] (a1) at (0,1) {};
\node [draw, shape=circle] (a2) at (0,3) {};
\node [draw, shape=circle] (a3) at (1,0) {};
\node [draw, shape=circle] (a4) at (2,2) {};
\node [draw, shape=circle] (a5) at (1,4) {};
\node [draw, shape=circle] (a6) at (3,0) {};
\node [draw, shape=circle] (a7) at (3,4) {};
\node [draw, shape=circle] (a8) at (4,2) {};
\node [draw, shape=circle] (a9) at (6,2) {};
\node [draw, shape=circle] (a10) at (8,2) {};

\draw(a4)--(a1)--(a2)--(a4)--(a3)--(a6)--(a4)--(a5)--(a7)--(a4)--(a8)--(a9)--(a10);
\end{tikzpicture}
\caption{The graph $G_{3,10}$.}\label{graphs-realiz}
\end{figure}

Now on we assume $2\le r\le t \le 2r-1\le n-2$. We consider a graph $G_{x,y,z}\in \mathcal{F}$. From Remark \ref{graph-F} we know that $G_{x,y,z}$ has order $2x+y+z+2$ and satisfies that $dim(G_{x,y,z})=x+y-1$ and $edim(G_{x,y,z})=2x+y-2$. Since we are looking for a graph $G$ of order $n$ such that  $dim(G)=r$ and $edim(G)=t$, we must find a graph $G_{x,y,z}\in \mathcal{F}$ for some $x,y,z$ that will satisfy the following system of linear equations.
$$\begin{array}{r}
  2x+y+z+2=n \\
  x+y-1=r \\
  2x+y-2=t
\end{array}$$
We can easily compute that such system has solution $x=t-r+1$, $y=2r-t$ and $z=n-t-4$ (note that these values represent integer numbers). Since the graph $G_{x,y,z}\in \mathcal{F}$ satisfies that $x\ge 1$, $y\ge 2$ and $z\ge 0$, we get that $t-r+1\ge 1$, $2r-t\ge 2$ and $n-t-4\ge 0$. Thus, it follows that $t\ge r$, $t\le 2r-2$ and $t\le n-4$.

According to this, only the following cases remain, (1): $t=2r-1\le n-2$ or (2): ($2\le r\le t\le 2r-2$ and $t\in \{n-3,n-2\}$). Assume $t=2r-1\le n-2$. Consider the graph $G_{r}$ obtained as follows. We begin with the graph $G''=K_1+ (\bigcup_{i=1}^rK_2)$. Then we add a path of order $n-2r-1\ge 0$ (the case $n-2r-1=0$ means that we do not add any path and, clearly $n=2r+1$) and join by an edge a leaf of such path with one non central vertex of $G''$. See Figure \ref{graphs-realiz-3} for an example. It is straightforward to observe that $G_r$ has order $n-2r-1+2r+1=n$ and satisfies that $dim(G_r)=r$ and that $edim(G_r)=2r-1=t$. Since $n-2r-1\ge 0$ and $t=2r-1$ we deduce that $t\le n-2$.

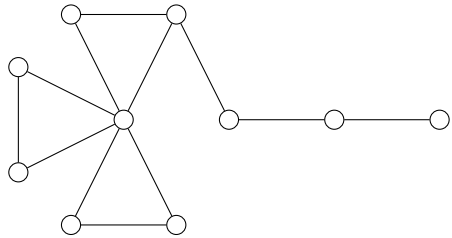
\begin{figure}[ht]
\centering
\begin{tikzpicture}[scale=.7, transform shape]

\node [draw, shape=circle] (a1) at (0,1) {};
\node [draw, shape=circle] (a2) at (0,3) {};
\node [draw, shape=circle] (a3) at (1,0) {};
\node [draw, shape=circle] (a4) at (2,2) {};
\node [draw, shape=circle] (a5) at (1,4) {};
\node [draw, shape=circle] (a6) at (3,0) {};
\node [draw, shape=circle] (a7) at (3,4) {};
\node [draw, shape=circle] (a8) at (4,2) {};
\node [draw, shape=circle] (a9) at (6,2) {};
\node [draw, shape=circle] (a10) at (8,2) {};

\draw(a4)--(a1)--(a2)--(a4)--(a3)--(a6)--(a4)--(a5)--(a7)--(a4);
\draw(a7)--(a8)--(a9)--(a10);
\end{tikzpicture}
\caption{The graph $G_{4}$.}\label{graphs-realiz-3}
\end{figure}

Finally, we assume $2\le r\le t\le 2r-2$ with $t\in \{n-3,n-2\}$. First suppose that $t=n-3$. Consider the graph $G'_{r,t}$ given by the join graph $K_1+[(\bigcup_{i=1}^{2r-t+1}K_1)\cup(\bigcup_{i=1}^{t-r}K_2)]$ and adding a pendant vertex to one of its vertices of degree one. See Figure \ref{graphs-realiz-2} (a) for an example. It is straightforward to observe that $G'_{r,t}$ has order $2(t-r)+2r-t+1+2=t+3=n$. Also, we can note that $dim(G'_{r,t})=2r-t+1+t-r-1=r$ and that $edim(G'_{r,t})=2r-t+1+2(t-r)-1=t$.

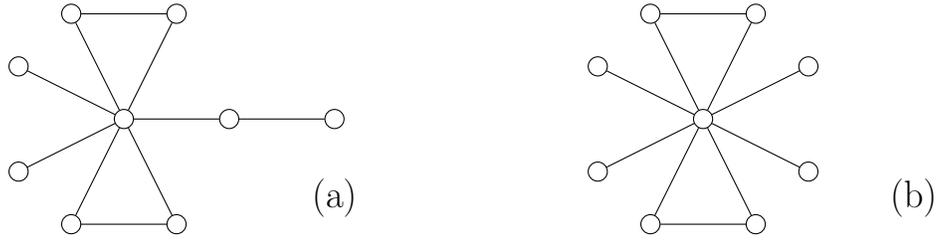
\begin{figure}[ht]
\centering
\begin{tikzpicture}[scale=.7, transform shape]

\node [draw, shape=circle] (a1) at (0,1) {};
\node [draw, shape=circle] (a2) at (0,3) {};
\node [draw, shape=circle] (a3) at (1,0) {};
\node [draw, shape=circle] (a4) at (2,2) {};
\node [draw, shape=circle] (a5) at (1,4) {};
\node [draw, shape=circle] (a6) at (3,0) {};
\node [draw, shape=circle] (a7) at (3,4) {};
\node [draw, shape=circle] (a8) at (4,2) {};
\node [draw, shape=circle] (a9) at (6,2) {};

\node [draw, shape=circle] (b2) at (11,1) {};
\node [draw, shape=circle] (b3) at (11,3) {};
\node [draw, shape=circle] (b4) at (12,0) {};
\node [draw, shape=circle] (b5) at (12,4) {};
\node [draw, shape=circle] (b6) at (13,2) {};
\node [draw, shape=circle] (b7) at (14,0) {};
\node [draw, shape=circle] (b8) at (14,4) {};
\node [draw, shape=circle] (b9) at (15,1) {};
\node [draw, shape=circle] (b10) at (15,3) {};

\draw(a1)--(a4)--(a2);
\draw(a4)--(a3)--(a6)--(a4)--(a5)--(a7)--(a4)--(a8)--(a9);
\draw(b3)--(b6)--(b4)--(b7)--(b6)--(b5)--(b8)--(b6)--(b9);
\draw(b10)--(b6)--(b2);

\draw (6,0) node[above] {\huge{(a)}};
\draw (17,0) node[above] {\huge{(b)}};

\end{tikzpicture}
\caption{The graph $G'_{4,6}$ (a) and the graph $G''_{5,7}$ (b).}\label{graphs-realiz-2}
\end{figure}

Now suppose that $t=n-2$. In such case we use a similar construction as above. Consider the graph $G''_{r,t}$ given by the join graph $K_1+[(\bigcup_{i=1}^{2r-t+1}K_1)\cup(\bigcup_{i=1}^{t-r}K_2)]$. See Figure \ref{graphs-realiz-2} (b) for an example. It is straightforward to observe that $G''_{r,t}$ has order $2(t-r)+2r-t+1+1=t+2=n$. Moreover, it is also satisfied that $dim(G''_{r,t})=2r-t+1+t-r-1=r$ and that $edim(G''_{r,t})=2r-t+1+2(t-r)-1=t$ and we are done for this case, which completes the whole proof.
\end{proof}

As a consequence of the theorem above, one could think that for any graph $G$ it follows that $edim(G)\le 2 dim(G)$. However, this is not true, which can be seen from the next example.

\begin{example}
Let us take the wheel graph $W_{1,6}$. In Section \ref{dim-less-edim} we recall the formulae for the metric dimension $($from {\em \cite{buczkowski}}$)$ and compute the edge metric dimension of wheel graphs, which gives for instance, $edim(W_{1,6})=5$ and $dim(W_{1,6})=2$. Thus, it follows that $edim(W_{1,6}) > 2dim(W_{1,6})$.
\end{example}

Other similar examples can be easily presented for wheels or fan graphs of higher order. Moreover, we also observe that the difference between edge metric dimension and metric dimension can be as large as possible.

\begin{proposition}
For any integer $q\ge 1$, there exists a connected graph $G$ such $edim(G)-dim(G)\ge q$.
\end{proposition}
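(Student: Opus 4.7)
My plan is to exploit the wheel graphs $W_{1,n}$, whose metric and edge metric dimensions were both recalled or established in Subsection \ref{dim-less-edim}. Specifically, for $n\ge 7$ we have $dim(W_{1,n})=\lfloor (2n+2)/5\rfloor$ from \cite{buczkowski}, while the proposition in Subsection \ref{dim-less-edim} gives $edim(W_{1,n})=n-1$ for all $n\ge 5$. Therefore
$$edim(W_{1,n})-dim(W_{1,n}) \;=\; n-1-\left\lfloor\frac{2n+2}{5}\right\rfloor \;\ge\; n-1-\frac{2n+2}{5} \;=\; \frac{3n-7}{5}.$$
The right-hand side is an unbounded, strictly increasing function of $n$.

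Given $q\ge 1$, I would therefore pick any integer $n$ satisfying $n\ge \max\{7,\ (5q+7)/3\}$ and take $G=W_{1,n}$. The inequality $n\ge (5q+7)/3$ is equivalent to $(3n-7)/5\ge q$, so combining with the displayed bound yields $edim(G)-dim(G)\ge q$, as required.

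The only step that requires any care is checking that the chosen $n$ falls into the regime where the two cited formulas apply simultaneously, which is why I take $n\ge 7$ explicitly; for every $q\ge 1$ such an $n$ exists and the graph $W_{1,n}$ is certainly connected. No genuine obstacle arises, since both ingredients (the formula for $dim(W_{1,n})$ and the computation of $edim(W_{1,n})$) are already available in the paper. Alternatively, exactly the same argument goes through using fan graphs $F_{1,n}$ by virtue of the analogous formulas stated in Subsection \ref{dim-less-edim}, so either family yields the conclusion.
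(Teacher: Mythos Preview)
Your proof is correct and follows essentially the same approach as the paper: both use wheel graphs $W_{1,n}$, invoke the formulas $dim(W_{1,n})=\lfloor(2n+2)/5\rfloor$ and $edim(W_{1,n})=n-1$, and observe that the difference grows without bound. Your threshold $n\ge (5q+7)/3$ is in fact slightly sharper than the paper's stated $n\ge (5q+2)/3$, and your explicit care in taking $n\ge 7$ to ensure both formulas apply is a nice touch, but the argument is otherwise identical.
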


\begin{proof}
The result can be obtained by using the wheel or fan graphs. For instance, from Subsection \ref{dim-less-edim} we know that for every $n\ge 6$ it follows that $dim(W_{1,n})=\left\lfloor\frac{2n+2}{5}\right\rfloor$ and that $edim(W_{1,n})=n-1$. Thus, by taking a wheel graph $W_{1,n}$ such that $n\ge \frac{5q+2}{3}$ we deduce that $n-1-\left\lfloor\frac{2n+2}{5}\right\rfloor\ge q$.
\end{proof}

According to the results obtained until here in this subsection for the case $dim(G)<edim(G)$, it remains to complete the realization of the triplet $r,t,n$ for the case $r\ge 2$ and $t>2r$ (if $2r<n-2$). Thus, we point out the following open problem. Is it possible to find a graph $G$ of order $n$ such that $dim(G)=r$ and $edim(G)=t$ for any integers $r,t,n$ with $r\ge 2$ and $2r<t\le n-2$?

Finally, we analyze the realizability of graphs $G$ for which $edim(G)<dim(G)$. In contrast with the other possibility $dim(G)<edim(G)$, it seems that given a triplet of integers $r,t,n$ with $2\le t<r\le n-2$, it is quite a challenging problem to provide a connected graph $G$ of order $n$ such that $dim(G)=r$ and $edim(G)=t$. From our results (only the Theorem \ref{torus-4r-4t}), we know that if $r=4$ and $t=3$, then for any $n=16k$ for some integer $k\ge 1$, it is possible to provide a graph satisfying the conditions above. On the contrary, we have not found any other example in which this is also satisfied and we post the following question. Given any three integers $r,t,n$ with $2\le t<r\le n-2$: Is it possible to construct a connected graph $G$ of order $n$ such that $dim(G)=r$ and $edim(G)=t$? Another approach could be related to finding a possible bound for $edim(G)$ in terms of $dim(G)$ for any connected graph $G$, under the supposition that $edim(G)<dim(G)$. For instance, if $G$ is the torus graph $C_{4r}\Box C_{4t}$, then $3=edim(G)=4-1=dim(G)-1$. In this sense: Is there a constant $c$ such that $edim(G)\le dim(G)-c$ for any connected graph $G$?

\section{Complexity issues}

Once studied some relationships between the edge metric dimension and the standard metric dimension, it is natural to think how much computationally difficult is the problem of computing the edge metric dimension of a graph. The decision problem concerning the metric dimension of a graph is already known as one of the classical NP-complete problems presented in the book \cite{garey} (a formal proof of it appeared in \cite{landmarks}). In this sense, it is natural to think that the similar problem for the edge metric dimension is also NP-complete, and one could think that an analogous result to that presented in \cite{landmarks} will immediately produce such conclusion. However, in concordance with other facts mentioned above, this is not the case. Indeed, proving the NP-completeness of our problem requires a harder working, although the reduction uses the 3-SAT problem, as in the case of the metric dimension proof of \cite{landmarks}. From now on, in this section we show that the problem of finding the edge metric dimension of an arbitrary connected graph is NP-hard. We first deal with the following decision problem.

$$\begin{tabular}{|l|}
  \hline
  \mbox{EDGE METRIC DIMENSION PROBLEM (EDIM problem for short)}\\
  \mbox{INSTANCE: A connected graph $G$ of order $n\ge 3$ and an integer $1\le r\le n-1$.}\\
  \mbox{QUESTION: Is $edim(G)\le r$?}\\
  \hline
\end{tabular}$$\\
To study the complexity of the problem above we make a reduction from the 3-SAT problem, which is one of the most classical problems known as NP-complete. For more information on this problem, and in NP-completeness reductions in general, we suggest \cite{garey}.

\begin{theorem}\label{theorem:NP-complete}
The EDIM problem is NP-complete.
\end{theorem}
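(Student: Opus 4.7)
The plan is to first verify that EDIM lies in NP, and then establish NP-hardness via a polynomial-time reduction from 3-SAT. Membership in NP is straightforward: given a candidate set $S\subseteq V(G)$ with $|S|\le r$, we run a BFS from each $s\in S$ to obtain $d_G(s,v)$ for every $v\in V(G)$, then compute $d_G(s,e)=\min\{d_G(s,u),d_G(s,w)\}$ for every edge $e=uw$, and finally check that the $|E(G)|$ vectors $r(e\mid S)$ are pairwise distinct. All of this runs in polynomial time, so a valid certificate can be verified efficiently.

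For NP-hardness the plan is to build, for each 3-SAT instance $\phi$ with variables $x_1,\dots,x_n$ and clauses $C_1,\dots,C_m$, a connected graph $G_\phi$ together with an integer $k$ (polynomial in $n+m$) such that $\phi$ is satisfiable if and only if $edim(G_\phi)\le k$. The construction will comprise: (i) a variable gadget $X_i$ for each $x_i$ containing two distinguished literal vertices $t_i$ and $f_i$, designed so that every edge metric generator must pick at least one of $\{t_i,f_i\}$ — this will be forced by placing, inside $X_i$, a pair of edges that can only be distinguished by one of $t_i$ or $f_i$; (ii) a clause gadget $Y_j$ for each $C_j$ containing a designated pair of "twin" edges that become indistinguishable from each other unless $S$ contains a vertex lying at different distances to the two; and (iii) edges joining $t_i$ (resp.\ $f_i$) to $Y_j$ whenever $x_i$ (resp.\ $\neg x_i$) appears in $C_j$, so that selecting $t_i$ (resp.\ $f_i$) corresponds to setting $x_i$ true (resp.\ false) and simultaneously distinguishes the twin edges of every clause $Y_j$ that the chosen literal satisfies. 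The integer $k$ will be $n$ plus the base cost of handling purely internal distinctions inside the $Y_j$'s.

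The forward direction is the easier one: starting from a satisfying assignment, I would take $S$ to consist of the "true" literal vertex of each $X_i$ together with a fixed, assignment-independent set of internal anchors inside the clause gadgets, and verify by case analysis that every pair of edges of $G_\phi$ (within a single gadget, between two variable gadgets, between a variable and a clause gadget, etc.) is resolved. The reverse direction is the main work: given an edge metric generator $S$ with $|S|\le k$, argue that $S$ must devote at least the claimed number of vertices to each gadget (a counting step using the twin edges) and, more delicately, that for each clause $Y_j$ at least one of the three literal vertices corresponding to literals of $C_j$ must lie in $S$ in order to resolve the twin pair of $Y_j$; defining $x_i:=$ true iff $t_i\in S$ then yields a satisfying assignment.

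The main obstacle will be the twin-edge construction itself, because the edge-to-vertex distance is a minimum rather than a genuine metric coordinate: two edges sharing an endpoint, or two edges lying on opposite sides of a common "middle" vertex, often collapse to the same distance value from many vertices, making it much easier for distinct edges to agree than for distinct vertices to agree. In particular the classical Khuller--Raghavachari--Rosenfeld variable gadget for $dim$ cannot be reused verbatim, since omitting both literal vertices there produces indistinguishable \emph{vertices} but not necessarily indistinguishable \emph{edges}. The gadgets therefore have to be redesigned so that the obstruction produced by a "bad" choice of $S$ manifests as a coincidence of two edge representations, and the bulk of the technical effort will go into verifying that these coincidences occur exactly where desired and nowhere else; once the gadgets are correctly tuned, both correctness directions follow by routine case analysis on the location of the edge pair and on the distance pattern induced by $S$.
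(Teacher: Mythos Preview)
Your high-level plan matches the paper's proof: NP membership via certificate checking, hardness via a 3-SAT reduction with per-variable gadgets forcing a binary choice, per-clause gadgets containing a distinguished pair of edges, and connector edges so that a literal vertex resolves the pair of exactly those clauses it satisfies. The paper carries this out with a $6$-cycle for each variable (forcing one of four internal vertices into $S$, with the choice of ``$a$-side'' versus ``$b$-side'' encoding the truth value), a $10$-vertex clause gadget forcing two vertices into $S$ (so $r=n+2m$), and communication edges wired so that the pair $c_j^1c_j^2,\; c_j^2c_j^4$ is resolved by $v_i$ precisely when the literal of $x_i$ chosen by $v_i$ satisfies $C_j$.

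What your sketch does not yet address, and what turns out to be the crux, is \emph{isolation}: you must guarantee that the clause pair in $Y_j$ is \emph{not} resolved by (i) the forced vertices inside $Y_j$ itself, (ii) the forced vertices of any other clause gadget $Y_k$, or (iii) the chosen vertex $v_i$ of a variable $x_i$ that does not occur in $C_j$. Otherwise the reverse direction collapses: a size-$k$ generator need not encode a satisfying assignment. The paper handles (ii) and (iii) by adding two extra layers of edges beyond the obvious communication edges --- ``correcting'' edges $c_j^2c_k^2$ between every pair of clause gadgets, and ``neutralizing'' edges $T_kc_j^2$ for every variable $u_k$ not occurring in $C_j$ --- whose sole purpose is to equalize the distances from irrelevant vertices to the two critical edges of $Y_j$. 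These additions are not cosmetic; without them the argument that ``the resolver of $Y_j$'s pair must come from a literal of $C_j$'' fails. Your proposal flags the twin-edge design as the main obstacle, but the harder part is engineering the inter-gadget wiring so that nothing \emph{except} the intended literals can resolve the clause pair, and this is where the work lies.
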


\begin{proof}
The problem is easily seen to be in NP. For a set of vertices $S$ guessed by a nondeterministic algorithm for the problem, one need to check that this is an edge metric generator. This can be done in polynomial time by calculating the distances from vertices to edges and checking that all pairs of edges have different distance vectors with respect to the set $S$. We now describe a polynomial transformation of the 3-SAT problem to the EDIM problem.

Consider an arbitrary input of the 3-SAT problem, a collection $C=\{c_1, c_2, \ldots, c_m\}$ of clauses over a finite set $U=\{u_1,u_2,\ldots,u_n\}$ of Boolean variables. We shall construct a connected graph $G=(V,E)$, such that setting a positive integer $r \leq |V|$, the graph $G$ has an edge metric generator of size $r$ or less if and only if $C$ is satisfiable. The construction will be made up of several components augmented by some additional edges for communicating between various components.

For each variable $u_i \in U$ we construct a truth-setting component $X_i=(V_i,E_i)$, with $V_i=\{T_i,F_i,a_i^1,a_i^2,b_i^1,b_i^2\}$ and $E_i=\{T_ia_i^1,T_ia_i^2,b_i^1a_i^1,b_i^2a_i^2,F_ib_i^1,F_ib_i^2\}$ (see Figure \ref{figure:NP1} for reference).
The nodes $T_i$ and $F_i$ are the \texttt{TRUE} and \texttt{FALSE} ends of the component, respectively. Each component is connected with the rest of the graph only through these two nodes which give us the following claim:

\begin{claim}\label{remark:NP1}
Let $u_i$ be an arbitrary variable in $U$. Any edge metric generator must contain at least one of the vertices $\{a_i^1,a_i^2,b_i^1,b_i^2\}$.
\end{claim}

\begin{proof}
Suppose that there exists an edge metric generator $S$ without any of these vertices in it. Since the component $X_i$ is attached to the rest of the graph only through the vertices $T_i$ and $F_i$, due to the symmetry, this implies that the edges $T_ia_i^1$ and $T_ia_i^2$ have the same distances to all vertices in the set $S$, a contradiction.
\end{proof}

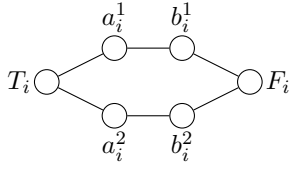
\begin{figure}[h]
\centering
\begin{tikzpicture}[scale=.9, transform shape]

\node [draw, shape=circle] (a1) at (-6,6) {};
\node [draw, shape=circle] (a2) at (-5,5.5) {};
\node [draw, shape=circle] (a3) at (-4,5.5) {};
\node [draw, shape=circle] (a4) at (-3,6) {};
\node [draw, shape=circle] (a5) at (-4,6.5) {};
\node [draw, shape=circle] (a6) at (-5,6.5) {};

\draw (-6.1,6) node[left] {$T_i$};
\draw (-2.9,6) node[right] {$F_i$};
\draw (-4,5.4) node[below] {$b_i^2$};
\draw (-5,5.4) node[below] {$a_i^2$};
\draw (-4,6.6) node[above] {$b_i^1$};
\draw (-5,6.6) node[above] {$a_i^1$};

\foreach \from/\to in {
 a1/a2, a2/a3, a3/a4, a4/a5, a5/a6, a6/a1}
\draw (\from) -- (\to);
\end{tikzpicture}
\caption{The truth-setting component for variable $u_i$.}\label{figure:NP1}
\end{figure}

Now, suppose that $c_j=y_j^1 \vee y_j^2 \vee y_j^3$, where $y_j^k$ is a literal in the clause $c_j$. For such clause $c_j$, we construct a satisfaction testing component $Y_j=(V_j',E_j')$, with $V_j'=\{c_j^1,\ldots, c_j^{10}\}$ and $E_j'=\{c_j^1c_j^2,c_j^1c_j^3,c_j^4c_j^2,c_j^4c_j^3,c_j^2c_j^5,c_j^5c_j^6,c_j^5c_j^7,c_j^3c_j^8,c_j^8c_j^9,c_j^8c_j^{10}\}$ (see Figure \ref{figure:NP2} for reference). The component is attached to the rest of the graph only through vertices $c_j^1$ and $c_j^2$ which give us the following claim.

\begin{claim}\label{remark:NP2}
Let $c_j$ be an arbitrary clause in $C$. Any edge metric generator must contain at least one of the vertices $\{c_j^6,c_j^7\}$ and at least one of the vertices $\{c_j^9,c_j^{10}\}$.
\end{claim}

\begin{proof}
Suppose that there exists an edge metric generator $S$ without any of the vertices $\{c_j^6,c_j^7\}$ in it. Since all the shortest paths from any vertex $x\ne c_j^6,c_j^7$ to the edges $c_j^5c_j^6$ and $c_j^5c_j^7$ go through the vertex $c_j^5$, this implies that the edges $c_j^5c_j^6, c_j^5c_j^7$ have the same distance to all vertices in the set $S$, a contradiction. A similar process works for the vertices $\{c_j^9,c_j^{10}\}$.
\end{proof}

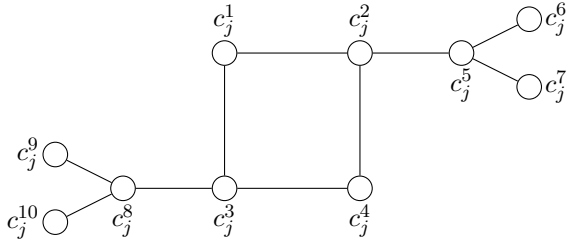
\begin{figure}[h]
\centering
\begin{tikzpicture}[scale=.9, transform shape]

\node [draw, shape=circle] (a1) at (-1,0) {};
\node [draw, shape=circle] (a2) at (1,0) {};
\node [draw, shape=circle] (a3) at (-1,-2) {};
\node [draw, shape=circle] (a4) at (1,-2) {};
\node [draw, shape=circle] (a5) at (2.5,0) {};
\node [draw, shape=circle] (a6) at (3.5,0.5) {};
\node [draw, shape=circle] (a7) at (3.5,-0.5) {};
\node [draw, shape=circle] (a8) at (-2.5,-2) {};
\node [draw, shape=circle] (a9) at (-3.5,-1.5) {};
\node [draw, shape=circle] (a10) at (-3.5,-2.5) {};

\draw (-1,0.1) node[above] {$c_j^1$};
\draw (1,0.1) node[above] {$c_j^2$};
\draw (-1,-2.1) node[below] {$c_j^3$};
\draw (1,-2.1) node[below] {$c_j^4$};
\draw (2.5,-0.1) node[below] {$c_j^5$};
\draw (3.6,0.5) node[right] {$c_j^6$};
\draw (3.6,-0.5) node[right] {$c_j^7$};
\draw (-2.5,-2.1) node[below] {$c_j^8$};
\draw (-3.6,-1.5) node[left] {$c_j^9$};
\draw (-3.6,-2.5) node[left] {$c_j^{10}$};


\foreach \from/\to in {
 a1/a2, a1/a3, a2/a4, a2/a5, a3/a4, a3/a8, a5/a6, a5/a7, a8/a9, a8/a10}
\draw (\from) -- (\to);
\end{tikzpicture}
\caption{The satisfaction testing component for clause $c_j$.}\label{figure:NP2}
\end{figure}

We also add some edges between truth-setting and satisfaction testing components as follows. If a variable $u_i$ occurs as a positive literal in a clause $c_j$, then we add the edges $T_ic_j^1$ and $F_ic_j^2$. If a variable $u_i$ occurs as a negative literal in a clause $c_j$, then we add the edges $T_ic_j^2$ and $F_ic_j^1$. For each clause $c_j \in C$ denote those six added edges with $E_j''$. We call them \textit{communication} edges. Figure \ref{figure:NP3} shows the edges that were added corresponding to the clause $c_j= (u_1 \vee \overline{u_2} \vee u_3)$, where $\overline{u_2}$ represents the negative literal corresponding to the variable $u_2$.

For all $k \in \{1, \ldots, n\}$ such that neither of $u_k$ and $\overline{u_k}$ occur in clause $c_j$, add the edges $T_kc_j^2$ to the graph $G$. For each clause $c_j \in C$ denote them with $E_j'''$. Those edges keep the graph to be connected. We call them \textit{neutralizing} edges, because no matter what value is assigned to the variable $u_k$ (or equivalently which vertex $v_k$ from the corresponding truth-setting component is chosen for an edge metric generator), this gives the same distance from such $v_k$ to the edges $c_j^1c_j^2$ and $c_j^2c_j^4$ from the satisfaction testing component corresponding to the clause $c_j$. These two edges play an important role later in the proof.

Finally, for each clause $c_j$ and every $k \in \{1, \ldots, m\}, k \neq j$, add the edges $c_j^2c_k^2$ to the graph $G$. For each clause $c_j \in C$ denote them with $E_j''''$. We call these edges as \textit{correcting} edges.

\begin{figure}[h]
\centering
\begin{tikzpicture}[scale=.9, transform shape]

\node [draw, shape=circle] (a1) at (-1,2) {};
\node [draw, shape=circle] (a2) at (1,2) {};
\node [draw, shape=circle] (a3) at (-1,0) {};
\node [draw, shape=circle] (a4) at (1,0) {};
\node [draw, shape=circle] (a5) at (2.5,2) {};
\node [draw, shape=circle] (a6) at (3.5,2.5) {};
\node [draw, shape=circle] (a7) at (3.5,1.5) {};
\node [draw, shape=circle] (a8) at (-2.5,0) {};
\node [draw, shape=circle] (a9) at (-3.5,0.5) {};
\node [draw, shape=circle] (a10) at (-3.5,-0.5) {};

\draw (-1,1.65) node[left] {$c_j^1$};
\draw (1,1.65) node[right] {$c_j^2$};


\node [draw, shape=circle] (a11) at (-6,6) {};
\node [draw, shape=circle] (a12) at (-5,5.5) {};
\node [draw, shape=circle] (a13) at (-4,5.5) {};
\node [draw, shape=circle] (a14) at (-3,6) {};
\node [draw, shape=circle] (a15) at (-4,6.5) {};
\node [draw, shape=circle] (a16) at (-5,6.5) {};

\draw (-6.1,6) node[left] {$T_1$};
\draw (-2.9,6) node[right] {$F_1$};

\node [draw, shape=circle] (a17) at (3,6) {};
\node [draw, shape=circle] (a18) at (4,5.5) {};
\node [draw, shape=circle] (a19) at (5,5.5) {};
\node [draw, shape=circle] (a20) at (6,6) {};
\node [draw, shape=circle] (a21) at (5,6.5) {};
\node [draw, shape=circle] (a22) at (4,6.5) {};

\draw (2.9,6) node[left] {$T_3$};
\draw (6.1,6) node[right] {$F_3$};

\node [draw, shape=circle] (a23) at (-1.5,7) {};
\node [draw, shape=circle] (a24) at (-0.5,6.5) {};
\node [draw, shape=circle] (a25) at (0.5,6.5) {};
\node [draw, shape=circle] (a26) at (1.5,7) {};
\node [draw, shape=circle] (a27) at (0.5,7.5) {};
\node [draw, shape=circle] (a28) at (-0.5,7.5) {};

\draw (-1.6,7) node[left] {$T_2$};
\draw (1.6,7) node[right] {$F_2$};

\foreach \from/\to in {
 a1/a2, a1/a3, a2/a4, a2/a5, a3/a4, a3/a8, a5/a6, a5/a7, a8/a9, a8/a10,
 a11/a12, a12/a13, a13/a14, a14/a15, a15/a16, a16/a11,
 a17/a18, a18/a19, a19/a20, a20/a21, a21/a22, a22/a17,
 a23/a24, a24/a25, a25/a26, a26/a27, a27/a28, a28/a23,
 a11/a1, a14/a2, a17/a1, a20/a2, a23/a2, a26/a1}
\draw (\from) -- (\to);
\end{tikzpicture}
\caption{The subgraph associated to the clause $c_j=(u_1\vee \overline{u_2} \vee u_3)$.}\label{figure:NP3}
\end{figure}
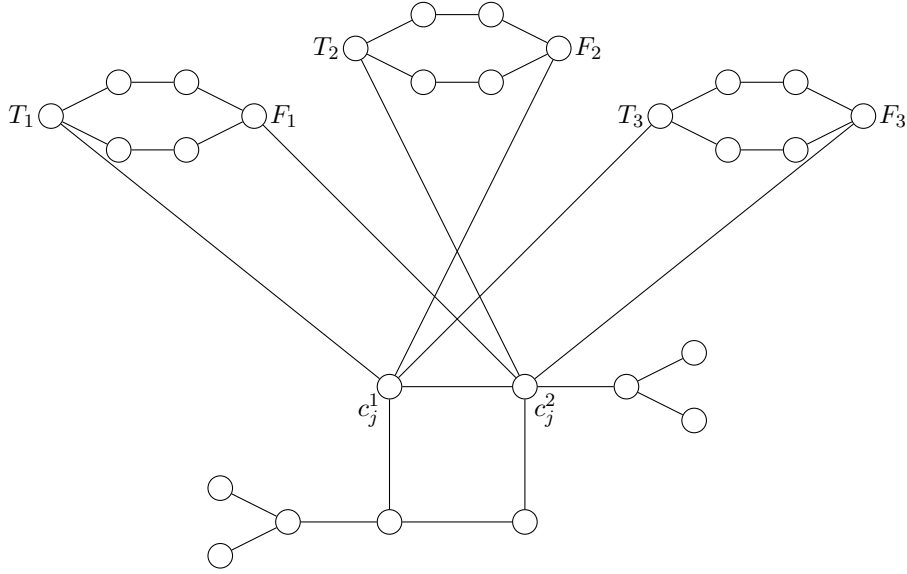

The construction of our instance of the EDIM problem is then completed by setting $r=2m+n$ and $G=(V,E)$, where
$$ V= \left( \bigcup_{i=1}^n V_i \right) \cup \left( \bigcup_{j=1}^m V_j' \right)$$
and
$$ E= \left( \bigcup_{i=1}^n E_i \right) \cup \left( \bigcup_{j=1}^m E_j' \right) \cup \left( \bigcup_{j=1}^m E_j'' \right) \cup \left( \bigcup_{j=1}^m E_j''' \right) \cup \left( \bigcup_{j=1}^m E_j'''' \right) $$

It is not hard too see that the construction can be done in polynomial time. It remains to show that $C$ is satisfiable if and only if $G$ has an edge metric generator of size $r$. From Claims \ref{remark:NP1} and \ref{remark:NP2} we get the following.

\begin{corollary}
The edge metric dimension of the graph $G$ is at least $r=2m+n$.
\end{corollary}

We now continue with the following lemmas which constitutes the heart of our NP-completeness reduction from 3-SAT.

\begin{lemma}\label{lema:NP1}
If $C$ is satisfiable, then the edge metric dimension of graph $G$ is $r$.
\end{lemma}

\begin{proof}
We know that the edge metric dimension is at least $r$. We now construct an edge metric generator $S$ of size $r$ based on a satisfying truth assignment for $C$.
Let $t:U \rightarrow \{\texttt{TRUE,FALSE}\}$ be a satisfying truth assignment for $C$. For each clause $c_j \in C$ put in the set $S$ vertices $c_j^6$ and $c_j^9$. For each variable $u_i \in U$ put in the set $S$ either the vertex $a_i^1$ if $t(u_i)=\texttt{TRUE}$, or the vertex $b_i^1$ if $t(u_i)=\texttt{FALSE}$.
We now show that $S$ is an edge metric generator for the graph $G$.

Let $e_{j,k}$ be an arbitrary correcting edge between the satisfaction testing components $c_j$ and $c_k$. We notice that $e_{j,k}$ is uniquely determined by the set of vertices $\{c_j^6,c_k^6\}$, because this is the only edge in the graph $G$ having distance 2 to both of them $c_j^6$ and $c_k^6$.

Let $i \in \{1, \ldots,n\}$ and $j \in \{1, \ldots,m\}$ be arbitrary indexes and let $v_i \in V_i \cap S$. Since we have already checked that any correcting edge is uniquely determined by some vertices in $S$, we do not have to check any pair of edges in which occur at least one correcting edge.
Also, it is easy to check that each communication edge and each neutralizing edge between a truth-setting component $X_i$ and a satisfaction testing component $Y_j$ is distinguished from all the remaining edges by the vertices $ v_i$, $c_j^6$ and $c_j^9$.

We next take a look at the edges in a truth-setting component. Let $i \in  \{1, \ldots,n\}$ be an arbitrary index and let $e \in E_i$ be an arbitrary edge from $X_i$. Since we have already checked that all correcting, communication and neutralizing edges are distinguished by some vertices from $S$ we only need to check that $e$ has different distance vectors: (1) from all other edges in $X_i$, (2) from all edges in other truth-setting components, and (3) from all edges in the satisfaction testing components. This is addressed at next. (1) For checking that $e$ has different distance vectors to all other edges in $X_i$, we consider two possibilities.
\begin{itemize}
\item $u_i$ or $\overline{u}_i$ is a literal in at least one clause $c_j$. Thus, the vertices $v_i$, $c_j^6$ and $c_j^9$ distinguish the edge $e$ from all other edges in $X_i$.
\item neither $u_i$ nor $\overline{u}_i$ are literals in any clause $c_j$. Thus, for an arbitrary $j \in \{1, \ldots, m\}$, the vertices $v_i$, $c_j^6$ distinguish the edge $e$ from all other edges in $X_i$.
\end{itemize}
For (2), let $k \in  \{1, \ldots,n\}, k\neq i$, be an arbitrary index. The vertex $v_i$ distinguishes the edge $e$ from all edges $f \in E_k$ (the edges in the truth-setting component $X_k$). For (3), let $j \in  \{1, \ldots,m\}$ be an arbitrary index. Hence, the vertices $c_j^6$ and $c_j^9$ distinguish edge $e$ from all edges $f \in E_j'$ (the edges in the satisfaction testing component $Y_j$).

Finally, we take a look at the edges from the satisfaction testing components.
Let $j \in  \{1, \ldots,m\}$ be an arbitrary index. Each one of the edges $\{c_j^2c_j^5,c_j^5c_j^6,c_j^5c_j^7,c_j^3c_j^8,c_j^8c_j^9,c_j^8c_j^{10}\}$ is uniquely determined by the set of vertices $\{c_j^6,c_j^9\}$. Those two vertices also distinguish the edges $c_j^1c_j^2, c_j^2c_j^4$ from all the other edges. Similarly, the same holds for the edges $c_j^1c_j^3, c_j^3c_j^4$.
To complete the proof, we need to show that for precisely this pair of edges there exists a vertex in the set $S$ that distinguish them. Since $C$ is satisfiable, suppose that $c_j$ is satisfied by the variable $u_i$. For the variable $u_i$ there are two possibilities:
\begin{itemize}
\item $u_i$ occurs as a positive literal in $c_j$ and $t(u_i)=\texttt{TRUE}$
\item $u_i$ occurs as a negative literal in $c_j$ and $t(u_i)=\texttt{FALSE}$.
\end{itemize}
Thus, if $t(u_i)=\texttt{TRUE}$, then we have added the vertex $a_i^1$ to the set $S$. In such case, the distance from $a_i^1$ to the edge $c_j^1c_j^2$ is 2, while the distance to the edge $c_j^2c_j^4$ is 3. Similarly, the distance from $a_i^1$ to the edge $c_j^1c_j^3$ is 2 and to the edge $c_j^3c_j^4$ is 3. The case when $t(u_i)=\texttt{FALSE}$ is symmetric.

Therefore, any two edges are distinguished by a vertex of $S$, and as as consequence, $S$ is an edge metric generator for graph $G$, which completes the proof of this lemma.
\end{proof}

\begin{lemma}\label{lema:NP2}
If the edge metric dimension of graph $G$ is $r$, then $C$ is satisfiable.
\end{lemma}

\begin{proof}
Let $S$ be an arbitrary edge metric generator for graph $G$ with cardinality $r$. From Claims \ref{remark:NP1} and \ref{remark:NP2}, the set $S$ must contain at least one vertex from each truth-setting component and at least two vertices from each satisfaction testing component. Since the cardinality of $S$ equals $r=2m+n$, it follows that in the set $S$ there is exactly one vertex from each truth-setting component and exactly two vertices from each satisfaction testing component.
We shall find a function $t: U \rightarrow \{\texttt{TRUE,FALSE}\}$ such that it represents a satisfying truth assignment for the collection of clauses $C$. For an arbitrary $i\in \{1, \ldots, n\}$, let $v_i \in V_i \cap S$. Hence, we define a function $t$ as follows:
$$t(u_i)=\left\{\begin{array}{ll}
                           \texttt{TRUE}, & v_i \in \{a_i^1,a_i^2\}, \\
                           \texttt{FALSE}, & v_i \in \{b_i^1,b_i^2\}.
                         \end{array}
\right.$$
We shall show that $t$ produces a satisfying truth assignment for $C$. To this end, let $c_j$ be an arbitrary clause. We claim that at least one of its literals has value \texttt{TRUE}. We prove that fact, by tracing which vertex from $S$ distinguishes the edges $e_j^1=c_j^1c_j^2$ and $e_j^2=c_j^2c_j^4$, and showing that the corresponding function $t$ satisfies $c_j$.

Let $k \in  \{1, \ldots,m\}$ be an arbitrary index. For the clause $c_k$ we assume, without loss of generality, that the vertices in the set $S$ are $c_k^6$ and $c_k^9$. If $j=k$, then both edges $e_j^1$ and $e_j^2$ are at distance 2 from $c_k^6$ and at distance 3 from $c_k^9$. If $j \neq k$, then by using the correcting edges, we deduce that the edges $e_j^1$ and $e_j^2$ are at distance 3 from $c_k^6$ and at distance 5 from $c_k^9$. Therefore, none of these vertices distinguish  $e_j^1$ from $e_j^2$.

Now, consider any variable $u_i$ which does not occur in $c_j$. If $v_i \in \{a_i^1,a_i^2\}$, then both edges $e_j^1, e_j^2$ are at distance 2 from $v_i$. If $v_i \in \{b_i^1,b_i^2\}$, then both edges are at distance 3 from $v_i$. Thus, the vertex of $S$ distinguishing the edges $e_j^1,e_j^2$ must belong to one of the truth-setting components that corresponds to a variable $u_k$ that occurs in the clause $c_j$. We recall that we have added communication edges in such a manner that $v_k$ distinguishes the edges $e_j^1$ and $e_j^2$ only if one of the following statements holds:
\begin{itemize}
\item $u_k$ occurs as a positive literal in $c_j$ and $v_k \in \{a_k^1,a_k^2\}$ - in this case $t(u_k)=\texttt{TRUE}$;
\item $u_k$ occurs as a negative literal in $c_j$ and $v_k \in \{b_k^1,b_k^2\}$ - in this case $t(u_k)=\texttt{FALSE}$;
\end{itemize}
In both cases the clause $c_j$ is satisfied by the setting assigned to the variable $u_k$. As a consequence, the formula $C$ is satisfiable, which completes the proof of this lemma.
\end{proof}

As a consequence of the Lemmas \ref{lema:NP1} and \ref{lema:NP2} above, the polynomial transformation from 3-SAT to the EDIM problem is done, and the proof of the theorem is now completed.
\end{proof}

As a consequence of Theorem \ref{theorem:NP-complete} we have the following result.

\begin{corollary}\label{np-hard}
The problem of finding the edge metric dimension of a connected graph is NP-hard.
\end{corollary}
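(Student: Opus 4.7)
The plan is to derive this corollary as an immediate consequence of Theorem \ref{theorem:NP-complete}, using the standard reduction from a decision problem to its associated optimization (search) problem. Essentially, any algorithm that computes $edim(G)$ exactly can be used as a black box to answer the EDIM decision problem in the same asymptotic running time, which forces the optimization problem to be at least as hard as EDIM.

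More precisely, I would argue as follows. Suppose, toward a contradiction, that there exists a polynomial-time algorithm $\mathcal{A}$ that, given a connected graph $G$ on $n \geq 3$ vertices, outputs the integer $edim(G)$. Given any instance $(G, r)$ of the EDIM problem with $1 \leq r \leq n-1$, one can run $\mathcal{A}$ on $G$ to obtain $edim(G)$ in polynomial time, and then simply return \texttt{YES} if $edim(G) \leq r$ and \texttt{NO} otherwise. This is a polynomial-time decision procedure for EDIM. Since Theorem \ref{theorem:NP-complete} establishes that EDIM is NP-complete, the existence of such an $\mathcal{A}$ would imply $P = NP$. Equivalently, the EDIM problem is polynomial-time Turing-reducible to the problem of computing $edim(G)$, so the latter inherits NP-hardness from the former.

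There is essentially no obstacle here, since the reduction is a one-line observation; the entire difficulty of the result is already carried by the NP-completeness proof of Theorem \ref{theorem:NP-complete}. The only minor point worth making explicit is that the comparison $edim(G) \leq r$ and the verification $1 \leq r \leq n-1$ are trivially polynomial in the input size, so the overall reduction is indeed polynomial, which confirms that computing the edge metric dimension of a connected graph is NP-hard.
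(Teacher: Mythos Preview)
Your proposal is correct and matches the paper's approach: the paper simply states the corollary as an immediate consequence of Theorem~\ref{theorem:NP-complete} without further argument, and your write-up just makes explicit the standard one-line Turing reduction from the decision problem to the optimization problem.
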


\subsection{Approximation of the EDIM problem}

In concordance with Corollary \ref{np-hard}, finding the edge metric dimension of a graph is NP-hard in general. Thus, it is reasonable to look for an approximation algorithm for it. We use an approach similar to that in \cite{landmarks} getting an approximation in polynomial time within a factor of $O(\log{m})$ where $m$ is the number of edges of the graph. We show that the problem of finding the edge metric dimension can be transformed in polynomial time to the set cover problem. Once we have the set cover problem we use the $O(\log m)$ factor approximation algorithm for the set cover problem \cite{approximationAlgorithms} to obtain an approximation algorithm for the EDIM problem.

\begin{theorem}
Let $G=(V,E)$ be an arbitrary connected graph with $m$ edges. Then $edim(G)$ can be approximated within a factor of $O(\log m)$ in polynomial time.
\end{theorem}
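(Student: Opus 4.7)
The plan is to reduce the EDIM problem to the \textsc{Set Cover} problem in polynomial time, after which the known $O(\log N)$-approximation for \textsc{Set Cover} (where $N$ is the size of the universe) will produce the claimed bound. This follows the same template used in \cite{landmarks} for the standard metric dimension; the only adaptation is that now the objects to be distinguished are edges instead of vertices.

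First I would construct the \textsc{Set Cover} instance explicitly. Take the universe to be the collection
\[
\mathcal{U} = \bigl\{ \{e_1,e_2\} : e_1,e_2 \in E,\ e_1 \neq e_2 \bigr\},
\]
so $|\mathcal{U}| = \binom{m}{2}$. For each vertex $v \in V$ define
\[
\mathcal{S}_v = \bigl\{ \{e_1,e_2\} \in \mathcal{U} : d_G(v,e_1) \neq d_G(v,e_2) \bigr\},
\]
i.e.\ the set of edge-pairs that $v$ distinguishes. By the very definition of an edge metric generator, a subset $S \subseteq V$ is an edge metric generator for $G$ if and only if $\{\mathcal{S}_v : v \in S\}$ is a cover of $\mathcal{U}$. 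Hence the minimum edge metric generator for $G$ has the same cardinality as a minimum set cover of the instance $(\mathcal{U}, \{\mathcal{S}_v\}_{v \in V})$.

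Next I would verify that the reduction runs in polynomial time. Computing all pairwise vertex distances can be done in polynomial time (for instance by BFS from each vertex), and from these the edge-to-vertex distances $d_G(v,e) = \min\{d_G(v,u),d_G(v,w)\}$ for $e=uw$ are immediate. For every vertex $v$ and every pair $\{e_1,e_2\}$ we then need only compare two numbers to decide whether $\{e_1,e_2\} \in \mathcal{S}_v$, a total of $O(nm^2)$ comparisons. So the \textsc{Set Cover} instance is built in polynomial time.

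Finally, apply the classical greedy $O(\log N)$-approximation algorithm for \textsc{Set Cover} from \cite{approximationAlgorithms} to $(\mathcal{U},\{\mathcal{S}_v\})$. Since $N = |\mathcal{U}| = \binom{m}{2} = O(m^2)$, this gives an approximation ratio of $O(\log m^2) = O(\log m)$ for the minimum cover, and by the equivalence established above, the resulting cover produces an edge metric generator whose size is within a factor $O(\log m)$ of $edim(G)$. The whole procedure runs in polynomial time, which yields the statement. I do not expect a substantial obstacle: the only subtlety is the correspondence between covers and edge metric generators, which is immediate from the definitions, and ensuring that $\log\binom{m}{2}$ is indeed $O(\log m)$.
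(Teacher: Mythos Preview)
Your proposal is correct and follows essentially the same approach as the paper: both reduce EDIM to \textsc{Set Cover} by taking the universe to be all $\binom{m}{2}$ unordered edge-pairs and letting $\mathcal{S}_v$ be the pairs distinguished by $v$, then invoke the greedy $O(\log N)$-approximation for \textsc{Set Cover}. Your write-up is in fact somewhat more explicit than the paper's about the polynomial-time construction and about why $\log\binom{m}{2}=O(\log m)$.
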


\begin{proof}
Starting from a graph $G$ we first construct an instance of the set cover problem, similarly to the one in \cite{approximationAlgorithms}. Let $F$ be a finite family $\{S_1, S_2, \ldots, S_p\}$ of finite sets and let $U= \bigcup_{S \in F}S$ be the universe set. We look for a subfamily $F' \subseteq F$ with minimum cardinality for which it holds that $\bigcup_{S \in F'}S = U$.

For each vertex in the graph $G$ we can compute in polynomial time all the pairs of edges that have different distance to that vertex. For a vertex $v$, denote  with $S_v$ the set of all such pairs of edges. To solve the EDIM problem one has to find a set of vertices $S$ with minimum cardinality such that every pair of edges is distinguished by some vertex $v \in S$.
We can easily transform the EDIM problem to the set cover problem by setting $F=\{S_{v_1}, \ldots, S_{v_n}\}$, where $v_1, \ldots, v_n$ are all the vertices from the graph $G$. Observe that the universe set $U$ is the set of all possible pairs of edges in the graph $G$ with cardinality $\binom m2$.
It is not hard to see that there exists an edge metric basis of size $k$ if and only if there is a set cover of size $k$.

For the set cover problem there is a polynomial approximation algorithm that finds a set cover within a factor of $O(\log m)$. Therefore, we get the same approximation for the EDIM problem.
\end{proof}

\section{Some bounds and closed formulae}


It is clear that for any vertex $v$ of a connected $G$, the set $V(G)-\{v\}$ is an edge metric generator. Also, it is necessary at least to have one vertex in any edge metric generator. Thus, natural bounds on the edge metric dimension of a graph are the following ones. For any connected graph $G$ of order $n$,
\begin{equation}\label{general-bound}
1\le edim(G)\le n-1.
\end{equation}

The graphs achieving the equality in the lower bound above is relatively easy to deal with, being the same as for the standard metric dimension. This was already given in Remark \ref{rem-path-1}. However, for the upper bound, characterizing all the graphs satisfying the equality is not exactly clear how to settle, which is quite different from the standard metric dimension, where it is known that $dim(G)=n-1$ if and only if $G$ is a complete graph.



\begin{proposition}
Let $G$ be a connected graph of order $n$ and $edim(G)=n-1$. Then for every $u, v \in V(G)$, $u \neq v$ it holds $N(u) \cap N(v) \neq \emptyset$.
\end{proposition}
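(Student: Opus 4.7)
The plan is to prove the contrapositive: if there exist distinct vertices $u, v \in V(G)$ with $N(u) \cap N(v) = \emptyset$, then $edim(G) \le n-2$. I would do this by exhibiting an explicit edge metric generator of size $n-2$, namely $S = V(G) \setminus \{u,v\}$, and showing it distinguishes every pair of edges.

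The crucial observation to isolate at the start is that for any vertex $w$ and any edge $e = xy$, we have $d_G(e,w) = 0$ iff $w \in \{x,y\}$. Hence, if two edges $e_1 \ne e_2$ happen to share the same distance vector with respect to $S$, then any $w \in S$ that is an endpoint of exactly one of the two edges immediately distinguishes them via the value $0$. So the set of $S$-vertices that are endpoints of $e_1$ must coincide with the set of $S$-vertices that are endpoints of $e_2$.

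Assuming toward a contradiction that $S$ is not an edge metric generator, I would pick such a pair $e_1 \ne e_2$ and split into cases according to how many endpoints of $e_1$ lie in $S$. If both endpoints of $e_1$ are in $S$, then by the observation $e_2$ has exactly the same endpoints, so $e_1 = e_2$, a contradiction. If exactly one endpoint of $e_1$ lies in $S$, call it $x$, then $x$ is also the unique $S$-endpoint of $e_2$, and the other endpoints of $e_1$ and $e_2$ lie in $\{u,v\}$ and must be distinct (else $e_1 = e_2$); hence $\{e_1,e_2\} = \{xu, xv\}$, which forces $x \in N(u) \cap N(v)$, contradicting the hypothesis. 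If no endpoint of $e_1$ lies in $S$, then both endpoints of $e_1$ are in $\{u,v\}$ and likewise for $e_2$, so $e_1 = e_2 = uv$, a contradiction. Each case is impossible, so $S$ is indeed an edge metric generator and $edim(G) \le n-2$.

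There is no real obstacle in this proof: once the $0$-distance observation is stated, the case analysis is forced and short. The only point worth emphasizing is that exactly one case, namely $\{e_1,e_2\} = \{xu, xv\}$, is where the hypothesis $N(u) \cap N(v) = \emptyset$ is actually used; the other cases rule themselves out simply from $e_1 \ne e_2$ and from $|\{u,v\}| = 2$.
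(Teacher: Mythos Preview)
Your proposal is correct and follows essentially the same approach as the paper: both prove the contrapositive by showing that $S=V(G)\setminus\{u,v\}$ is an edge metric generator, using the observation that $d_G(e,w)=0$ iff $w$ is an endpoint of $e$ to reduce to the single problematic case $\{e_1,e_2\}=\{xu,xv\}$, which is ruled out by $N(u)\cap N(v)=\emptyset$. The only cosmetic difference is that the paper classifies edges by their endpoint pattern first and then argues no collision occurs, whereas you assume a collision and derive a contradiction; the logical content is identical.
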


\begin{proof}
If there are two distinct vertices $u$ and $v$ such that $N(u) \cap N(v) = \emptyset$, then we will show that $S = V(G) \setminus \lbrace u,v \rbrace$ is an edge metric generator. Let $e$ be an edge of $G$. Then we have the following options:
\begin{itemize}
\item If $e=xy$, where $x,y \in S$, then $e$ has distance $0$ to exactly two vertices is $S$, i.e. $x$ and $y$.
\item If $e = xu$ or $e= xv$, where $x \in S$, then $e$ has distance $0$ just to one vertex in $S$ - this is $x$.
\item If $e = uv$, then $e$ has distance more than $0$ to every vertex in $S$.
\end{itemize}
It is obvious that two edges $e$ and $f$ can have the same distance to every vertex in $S$ only in the case when $e = xu$ and $f = xv$ for some vertex $x \in S$. But we assumed that $N(u) \cap N(v) = \emptyset$ and therefore, this case can not happen. Hence, $S$ is an edge resolving set and $edim(G) \leq n-2$, a contradiction.
\end{proof}

\begin{proposition}
Let $G$ be a connected graph of order $n$. If there is a vertex $v \in V(G) $ of degree $n-1$, then either $edim(G) = n-1$ or $edim(G)=n-2$.
\end{proposition}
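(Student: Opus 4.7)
The plan is to pin down $edim(G)$ by proving the matching lower bound $edim(G)\ge n-2$; together with the general upper bound $edim(G)\le n-1$ from \eqref{general-bound} this forces $edim(G)\in\{n-2,n-1\}$. Equivalently, I need to show that no subset $S\subseteq V(G)$ of cardinality $n-3$ is an edge metric generator. So I will fix $S=V(G)\setminus\{x_{1},x_{2},x_{3}\}$ with three pairwise distinct omitted vertices, and in each case exhibit two edges of $G$ that $S$ fails to distinguish.

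The key observation driving everything is the following: since $\deg(v)=n-1$, every vertex $w\ne v$ satisfies $d_{G}(v,w)=1$. Consequently, for any edge of the form $e=vz$ (with $z\ne v$) and any vertex $w\in V(G)\setminus\{v,z\}$, we have
\[
d_{G}(e,w)=\min\bigl(d_{G}(w,v),d_{G}(w,z)\bigr)=\min\bigl(1,d_{G}(w,z)\bigr)=1,
\]
because $w\ne z$ forces $d_{G}(w,z)\ge 1$. Meanwhile $d_{G}(e,v)=0$. In short, every edge incident with $v$ has the same distance profile ($0$ at $v$, $1$ at all other vertices except its second endpoint) when viewed from any vertex that is not its second endpoint.

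Now I split into two cases according to whether the universal vertex lies in $S$. If $v\in S$ (equivalently, $v\notin\{x_{1},x_{2},x_{3}\}$), I consider the edges $e_{1}=vx_{1}$ and $e_{2}=vx_{2}$, which exist and are distinct by the universality of $v$. For $w=v\in S$ both distances equal $0$; for any other $w\in S$, necessarily $w\notin\{x_{1},x_{2},x_{3}\}$ so $w$ is not an endpoint of $e_{1}$ or $e_{2}$, and the key observation gives $d_{G}(e_{1},w)=1=d_{G}(e_{2},w)$. Hence $r(e_{1}|S)=r(e_{2}|S)$. If instead $v\in\{x_{1},x_{2},x_{3}\}$, say $v=x_{1}$, then I take $e_{1}=vx_{2}$ and $e_{2}=vx_{3}$; for every $w\in S$ we have $w\ne v,x_{2},x_{3}$, so the key observation (applied with $z=x_{2}$ and $z=x_{3}$) again yields $d_{G}(e_{1},w)=1=d_{G}(e_{2},w)$, giving $r(e_{1}|S)=r(e_{2}|S)$.

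In both cases $S$ fails to distinguish a pair of edges, so $S$ cannot be an edge metric generator. This proves $edim(G)\ge n-2$, completing the argument. There is no real technical obstacle here; the only subtle point is noticing that the edges incident with a universal vertex collapse to an essentially indistinguishable family from the viewpoint of any non-endpoint, which is exactly what lets a small omitted set of vertices produce a collision.
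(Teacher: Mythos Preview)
Your proof is correct and follows essentially the same idea as the paper's: both exploit that any two edges $vx$ and $vy$ through the universal vertex $v$ have identical distances (namely $0$ at $v$ and $1$ elsewhere) to every vertex other than $x$ and $y$, so any edge metric generator can omit at most one vertex besides $v$. The paper phrases this slightly more directly---it simply observes that omitting any two non-$v$ vertices $x,y$ already kills the generator via $vx$ and $vy$, thereby avoiding your case split on whether $v$ lies in $S$---but the argument is the same in substance.
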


\begin{proof}
Let $x$ and $y$ be distinct vertices, different from $v$ and $S \subseteq V(G) \setminus \lbrace x, y \rbrace$. If $e = xv$ and $f = yv$, then $d(e,v) = d(f,v)= 0$ and $d(e,z)=d(f,z)=1$ for every $z \in S \setminus \lbrace v\rbrace$. Therefore, $S$ can not be an edge resolving set. It follows that any edge resolving set contains all vertices of $G$, except maybe $v$ and one other vertex. Hence, $edim(G) \geq n-2$.
\end{proof}

\begin{proposition}
Let $G$ be a connected graph of order $n$. If there are two distinct vertices $u, v \in V(G) $ of degree $n-1$, then $edim(G) = n-1$.
\end{proposition}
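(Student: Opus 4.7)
The plan is to combine the previous proposition with a small case analysis. By the previous proposition, any edge metric generator has at least $n-2$ vertices, and by the trivial upper bound $edim(G)\le n-1$. So it suffices to rule out $edim(G)=n-2$, that is, to show that no subset $S\subseteq V(G)$ with $|S|=n-2$ is an edge metric generator. Let $\{x,y\}=V(G)\setminus S$ and split into three cases according to how $\{x,y\}$ meets $\{u,v\}$. In each case I will exhibit two edges with identical distance vector on $S$.

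The main structural fact I would use throughout is that, since $\deg(u)=\deg(v)=n-1$, for every vertex $z$ one has $d(u,z)\le 1$ and $d(v,z)\le 1$, with equality unless $z=u$ (resp.\ $z=v$). Consequently, for any edge of the form $ua$ or $va$ and any vertex $z$, the distance from the edge to $z$ is $0$ if $z$ is an endpoint of the edge, and is $1$ otherwise (assuming the non-central endpoint $a$ satisfies $z\ne a$). This computation is the only distance fact needed in all three cases.

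Case A: $\{x,y\}\cap\{u,v\}=\emptyset$. I take the two edges $ux$ and $uy$. For every $z\in S$, both edges have distance $0$ when $z=u$ and distance $1$ otherwise (using that $z\ne x$ and $z\ne y$ since $x,y\notin S$). Hence $r(ux\mid S)=r(uy\mid S)$.

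Case B: $\{x,y\}=\{u,v\}$. Pick any $a\in S$ (available since $n\ge 3$, the $n=2$ situation being trivial by $P_2$). For the edges $ua$ and $va$, and every $z\in S$ (so $z\notin\{u,v\}$), one obtains distance $0$ iff $z=a$ and distance $1$ otherwise, identically for both edges.

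Case C: exactly one of $x,y$ lies in $\{u,v\}$. Without loss of generality $x=u$ and $y\notin\{u,v\}$, so $v\in S$. I take the edges $uv$ and $vy$. At $z=v$ both edges have distance $0$. For any other $z\in S$ one has $z\notin\{u,v,y\}$, so $d(uv,z)=\min(1,1)=1$ and $d(vy,z)=\min(1,d(y,z))=1$ (using $z\ne y$). Hence the two edges again share the same representation on $S$.

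Since no set of size $n-2$ can be an edge metric generator, $edim(G)\ge n-1$, and combined with $edim(G)\le n-1$ the equality $edim(G)=n-1$ follows. The only mildly delicate point is Case C, where one has to resist the temptation to use the pair $\{uy,vy\}$ (which is in fact distinguished by $v\in S$) and instead spot that $uv$ and $vy$ are the right pair; everything else is a direct reading of the degree-$(n-1)$ hypothesis.
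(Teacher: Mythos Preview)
Your proof is correct and follows essentially the same approach as the paper: show that no $(n-2)$-subset $S$ can be an edge metric generator by exhibiting two edges, each incident with one of the degree-$(n-1)$ vertices, that share the same distance vector on $S$. The only difference is organizational: the paper collapses your Cases~A and~C into a single case (``at least one of $u,v$ lies in $S$'', say $v\in S$, then use the pair $vx,\,vy$ regardless of whether one of $x,y$ equals $u$), which avoids the need for your separate Case~C analysis and the observation about $\{uy,vy\}$ being the wrong pair.
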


\begin{proof}
We will show that every $S \subseteq V(G)$, which does not contain exactly two vertices of $G$, is not an edge metric generator. We consider two cases:
\begin{enumerate}
\item If $u$ and $v$ are not in $S$: let $e = xu$ and $f=xv$, where $x \in S$. Then $e$ and $f$ both have distance $0$ to $x$ and distance $1$ to every other vertex in $S$.
\item If at least one of the vertices $u$ and $v$ is in $S$: without loss of generality assume that $v \in S$ and $S = V(G) \setminus \lbrace x,y \rbrace$ where $x,y \in V(G) \setminus \lbrace v \rbrace$. Let $e = vy$ and $f = vx$. Then $e$ and $f$ both have distance $0$ to $v$ and distance $1$ to every other vertex in $S$.

\end{enumerate}
In both cases we can find two edges with the same distance to every vertex in $S$. Therefore, $S$ is not an edge metric generator. With this we have proved that $edim(G) = n-1$.
\end{proof}

We observe that there are graphs $G$ of order $n$ and maximum degree strictly less than $n-1$ for which $edim(G) = n-1$. The circulant graph\footnote{A circulant graph $CR(n,r)$ is a graph of order $n$ with vertex set $V=\{v_0,v_1,\dots,v_{n-1}\}$ such that $v_i$ is adjacent to $v_{i+j}$ with $j\in \{1,\dots,r\}$, $i\in \{0,\dots,n-1\}$ and the operation $i+j$ is done modulo $n$.} $CR(6,2)$ is a simple example of this, which leads to think that not only graphs $G$ of order $n$ and maximum degree $n-1$ satisfy that $edim(G) = n-1$.


We now continue with several bounds on the edge metric dimension of connected graphs. Some of these general bounds are obtained by using the approach of the edge metric representation of edges with respect to an edge metric basis.

\begin{proposition}
Let $G$ be connected graph and let $\Delta(G)$ be the maximum degree of $G$. Then,
$$edim(G) \geq \left\lceil \log_2{\Delta(G)} \right\rceil.$$
\end{proposition}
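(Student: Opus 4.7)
The plan is to pin down how little information the distances from an arbitrary vertex can reveal about the edges incident to a single vertex of maximum degree, and then apply a counting/pigeonhole argument. Let $v$ be a vertex of $G$ with $\deg(v)=\Delta(G)$, and let $e_1,\ldots,e_{\Delta}$ be the edges incident to $v$, so $e_i=vu_i$ for pairwise distinct neighbors $u_1,\ldots,u_\Delta$.

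First I would establish the key dichotomy: for any vertex $w\in V(G)$ and any edge $e_i=vu_i$, the value $d_G(e_i,w)$ lies in a set of size at most two that depends only on $w$ (not on $i$). Indeed, if $w=v$ then $d_G(e_i,v)=0$ for every $i$. If $w\neq v$, then $d_G(e_i,w)=\min\{d_G(v,w),d_G(u_i,w)\}$; since $u_i$ is a neighbor of $v$, $d_G(u_i,w)\in\{d_G(v,w)-1,d_G(v,w),d_G(v,w)+1\}$, and therefore
$$d_G(e_i,w)\in\{d_G(v,w)-1,\,d_G(v,w)\}.$$
This is the technical heart of the argument, and it is essentially the only step with real content; the rest is bookkeeping.

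Now let $S$ be any edge metric generator for $G$, with $|S|=edim(G)$. Consider the edge metric representations $r(e_1|S),\ldots,r(e_\Delta|S)$. By the observation above, each coordinate of each such vector takes at most two possible values (one value if that coordinate corresponds to $v$ itself, and at most two values otherwise). Consequently the total number of distinct vectors that can appear among $r(e_1|S),\ldots,r(e_\Delta|S)$ is at most $2^{|S|}$. Since $S$ distinguishes all edges, the vectors $r(e_i|S)$ are pairwise distinct, so $\Delta\le 2^{|S|}$, which gives
$$edim(G)=|S|\ge \lceil \log_2 \Delta(G)\rceil,$$
as desired.

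The main obstacle is really just the first step: one has to notice that being incident to a common vertex $v$ forces the distance from any external witness $w$ to be essentially determined up to a binary choice. Once that is in hand, the counting finishes the proof immediately, and no case analysis on whether $v\in S$ is needed because in that case the $v$-coordinate contributes only one value instead of two, which only strengthens the inequality $\Delta\le 2^{|S|}$.
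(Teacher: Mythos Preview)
Your proof is correct and follows essentially the same approach as the paper's: both arguments observe that any single vertex can take at most two distinct distance values to the set of edges incident with a fixed vertex of maximum degree, and then conclude by the pigeonhole bound $\Delta(G)\le 2^{edim(G)}$. Your version simply makes explicit the triangle-inequality justification for the two-value dichotomy that the paper states without proof.
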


\begin{proof}
From an arbitrary vertex $v \in V(G)$ there can be only two different distances to some set of incident edges. Therefore, to distinguish all edges that for one endpoint has the vertex $u$ with $\deg{u}=\Delta(G)$, it must hold $2^{edim(G)} \geq \Delta(G)$ and the assertion follows.
\end{proof}

\begin{proposition}
Let $G$ be a connected graph and let $S$ be an edge metric basis with $|S|=k$. Then $S$ does not contain a vertex with degree greater than $2^{k-1}$.
\end{proposition}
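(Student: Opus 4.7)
The plan is to fix a vertex $v \in S$ of degree $d$, look at the $d$ edges incident to $v$, and show that these edges can attain at most $2^{k-1}$ distinct edge metric representations with respect to $S$, forcing $d \le 2^{k-1}$.

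First, I would observe that every edge $e_i = vu_i$ incident to $v$ satisfies $d_G(v,e_i) = 0$, so the coordinate of the edge metric representation corresponding to $v$ itself is useless for distinguishing these $d$ edges; the whole burden falls on the remaining $k-1$ vertices of $S$. Hence I only need to bound the number of possible tuples $(d_G(w,e_i))_{w \in S\setminus\{v\}}$.

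The key step is the following simple observation. For any vertex $w \ne v$ and any neighbor $u_i$ of $v$, the triangle inequality gives $d_G(w,u_i) \ge d_G(w,v)-1$, so
\[
d_G(w, e_i) \;=\; \min\{d_G(w,v), d_G(w,u_i)\} \;\in\; \{d_G(w,v)-1,\ d_G(w,v)\}.
\]
Thus each fixed $w \in S\setminus\{v\}$ contributes at most two possible values to the representation of an edge incident to $v$. Therefore the number of distinct edge metric representations available for the $d$ edges incident to $v$ is at most $2^{k-1}$.

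Finally, since $S$ is an edge metric basis, these $d$ edges must all receive pairwise distinct representations, which yields $d \le 2^{k-1}$, completing the argument. The only thing to verify carefully is the min-with-two-values step above; once that is in hand, the counting is immediate, and I do not anticipate any real obstacle.
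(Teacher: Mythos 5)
Your proof is correct and follows essentially the same route as the paper: both arguments note that the coordinate at $v$ is constant (zero) on edges incident to $v$, and that each of the remaining $k-1$ basis vertices can assign at most two distinct distances to such edges, so at most $2^{k-1}$ representations are available. Your explicit verification that $d_G(w,e_i)\in\{d_G(w,v)-1,\,d_G(w,v)\}$ via the triangle inequality is a welcome sharpening of a step the paper states without justification, but the underlying counting argument is identical.
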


\begin{proof}
Suppose that there exists an edge metric basis with a vertex $v$ of degree greater than $2^{k-1}$. The incident edges with endpoint $v$ have all equal distance to $v$. So, there remain $k-1$ vertices to distinguish all those incident edges. Since from an arbitrary vertex $u \in V(G)$ there can be only two different distances to the set of incident edges it follows that this is not an edge metric generator. We get a contradiction with our assumption, so all vertices in an edge metric basis are of degree smaller or equal to $2^{k-1}$.
\end{proof}

\begin{proposition}
Let $G$ be a connected graph. If $edim(G)=k$ and $G$ has diameter $D$, then $|E(G)| \leq (D+1)^k$.
\end{proposition}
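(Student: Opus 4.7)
The plan is a straightforward pigeonhole/counting argument based on the edge metric representation introduced earlier in the paper.

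Let $S = \{s_1, s_2, \ldots, s_k\}$ be an edge metric basis of $G$, so that $|S| = k = edim(G)$. For every edge $e \in E(G)$ consider the edge metric representation
\[
r(e\,|\,S) = \bigl(d_G(e,s_1),\,d_G(e,s_2),\,\ldots,\,d_G(e,s_k)\bigr).
\]
By the definition of an edge metric generator recalled in the Introduction, the map $e \mapsto r(e\,|\,S)$ is injective on $E(G)$. Hence $|E(G)|$ is at most the number of possible values of this vector.

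Next I would bound the range of each coordinate. For any vertex $v \in V(G)$ and any edge $e = uw \in E(G)$, we have $d_G(e,v) = \min\{d_G(u,v),d_G(w,v)\} \le \max\{d_G(u,v),d_G(w,v)\} \le D$, since $D$ is the diameter of $G$. Also $d_G(e,v) \ge 0$, and the value $0$ is attained precisely when $v$ is an endpoint of $e$. Therefore each coordinate $d_G(e,s_i)$ takes a value in the set $\{0,1,2,\ldots,D\}$, which has exactly $D+1$ elements.

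Combining the two observations, the total number of possible edge metric representations is at most $(D+1)^k$, and by injectivity
\[
|E(G)| \le (D+1)^k,
\]
as required. There is no real obstacle here; the only subtlety is noting that the coordinates lie in $\{0,1,\ldots,D\}$ rather than $\{1,\ldots,D\}$, since edges incident with $s_i$ contribute the value $0$ in the $i$-th coordinate.
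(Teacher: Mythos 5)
Your proof is correct and follows essentially the same pigeonhole argument as the paper: each of the $k$ coordinates of an edge metric representation lies in $\{0,1,\ldots,D\}$, and injectivity of $e \mapsto r(e\,|\,S)$ gives $|E(G)| \le (D+1)^k$. Your version merely spells out the details (injectivity and the coordinate range) that the paper's brief proof leaves implicit.
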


\begin{proof}
Since the diameter of the graph $G$ equals $D$, the distance from an arbitrary vertex to an arbitrary edge in the graph $G$ can get values from $0$ to $D$. Therefore an edge metric basis can distinguish at most $(D+1)^k$ edges, and therefore the graph $G$ cannot have more edges.
\end{proof}


We next study the edge metric dimension of hypercubes graphs $Q_n$. To this end, we use a binary representation of $Q_n$. That is, the vertex set of $Q_n$ consists of the $2^n$-dimensional boolean vectors, \emph{i.e.}, vectors with binary coordinates $0$ or $1$, and two vertices are adjacent whenever they differ in exactly one coordinate. It is known (see \cite{erdos}) that in any $n$-dimensional hypercube, the set of vertices $B_n = \lbrace 11\ldots11, 01\ldots11, 10\ldots11, \ldots, 11\ldots01 \rbrace$ is a metric generator. We will prove that this set is also an edge metric generator for $Q_n$.

\begin{theorem}
Let $n$ be a positive integer and let $Q_n$ the $n$-dimensional hypercube. Then $edim(Q_n) \leq n$.
\end{theorem}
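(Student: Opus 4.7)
The plan is to verify that the set $B_n = \{\mathbf{1}, v_1, \ldots, v_{n-1}\}$ of $n$ vertices, where $\mathbf{1}$ is the all-ones vector and $v_j$ is the vector with a $0$ in coordinate $j$ and $1$ in every other coordinate, is itself an edge metric generator of $Q_n$. This will immediately yield the bound $edim(Q_n)\le |B_n|=n$.

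The core computation exploits the fact that the graph distance in $Q_n$ equals Hamming distance. Writing $|x|_0$ for the number of zero-coordinates of a vertex $x$, one has $d(\mathbf{1}, x) = |x|_0$, while $d(v_j, x) = |x|_0 - 1$ if $x_j = 0$ and $d(v_j, x) = |x|_0 + 1$ if $x_j = 1$. Given an edge $e = uv$, let $i$ be the unique coordinate at which $u$ and $v$ differ, and label the endpoints so that $u_i = 1$ and $v_i = 0$ (so $|v|_0 = |u|_0 + 1$). Combining the two formulas above via $d(e, w) = \min\{d(u, w), d(v, w)\}$ I expect to obtain
\begin{equation*}
d(\mathbf{1}, e) = |u|_0, \qquad d(v_j, e) = \begin{cases} |u|_0 & \text{if } j = i, \\ |u|_0 - 1 & \text{if } j \ne i \text{ and } u_j = 0, \\ |u|_0 + 1 & \text{if } j \ne i \text{ and } u_j = 1. \end{cases}
\end{equation*}

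The decisive observation is that the signed differences $\delta_j := d(v_j, e) - d(\mathbf{1}, e) \in \{-1, 0, +1\}$ encode the edge. To reconstruct $e$ from $r(e|B_n)$ I would read off $|u|_0 = d(\mathbf{1}, e)$; if some $j \le n-1$ has $\delta_j = 0$, that $j$ is the edge direction $i$ and $u_i = 1$; otherwise conclude $i = n$. For the remaining indices $j \in \{1, \ldots, n-1\} \setminus \{i\}$, the sign of $\delta_j$ directly reveals $u_j$. The only coordinate not read off this way is $u_n$ (in the subcase $i < n$), and it is pinned down by the total count identity $|u|_0 = \sum_k [u_k = 0]$. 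The partner $v$ is then $u$ with coordinate $i$ flipped, so the unordered pair $\{u,v\}$ is uniquely determined by $r(e|B_n)$.

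The main obstacle is the asymmetry of $B_n$: it contains no witness $v_n$, and therefore the coordinate at position $n$ is not directly probed by any distance in the representation. Handling this cleanly requires splitting the reconstruction into the two subcases $i < n$ and $i = n$ and, in the first subcase, invoking the zero-count identity to recover $u_n$; one must also check consistency, namely that the prescribed values of $|u|_0$ and the $\delta_j$ are always realizable by a genuine edge. Once this bookkeeping is settled, distinct edges produce distinct representations and the proposed $B_n$ is an edge metric generator of size $n$, completing the proof.
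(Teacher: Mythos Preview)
Your proposal is correct and uses the same generating set $B_n$ as the paper. Where the two arguments diverge is in the verification that $B_n$ is an edge metric generator. The paper argues by contradiction: assuming two edges $e,f$ with directions $i,j$ have equal representations, it splits into the cases $i\neq j$ (handled by a short parity/count argument) and $i=j$, and in the latter case it \emph{reduces} to the Erd\"{o}s--R\'{e}nyi result that $B_{n-1}$ is a vertex metric generator of $Q_{n-1}$. Your route instead computes $r(e\mid B_n)$ explicitly and exhibits a decoding procedure (via the signed differences $\delta_j$ and the zero-count identity for the unprobed coordinate $n$), which is more direct and entirely self-contained. The paper's approach has the virtue of reusing known machinery; yours yields slightly more information---the exact shape of the representation---and avoids the external citation. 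One minor remark: the ``consistency/realizability'' check you flag is unnecessary, since all you need is injectivity of $e\mapsto r(e\mid B_n)$, and that is already established once your decoding recovers $e$ uniquely from its representation.
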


\begin{proof}
We will show that the set of $n$ vertices $B_n = \lbrace 11\ldots11, 01\ldots11, 10\ldots11, \ldots, 11\ldots01 \rbrace$ is an edge metric generator. If $n=1$, this result follows immediately. Therefore, we assume that $n > 1$.
Let $e=uv$ and $f=xy$ be two different edges in $Q_n$. It suffices to prove that there exist $z \in B_n$ such that $d(e,z) \neq d(f,z)$. Suppose that this is not true. Thus, for every $z \in B_n$ it holds $d(e,z) = d(f,z)$. Of course, there is exactly one coordinate, let say $i$, such that $u_i \neq v_i$ and there is exactly one coordinate, let say $j$, such that $x_j \neq y_j$. Consider the following two cases.
\begin{enumerate}
\item $i \neq j$, and without loss of generality, let $i < j$: \\
Let $E$ be the number of coordinates $k \in \lbrace 1,2,\ldots,n \rbrace \setminus \lbrace i, j \rbrace $, such that $u_k=v_k = 0$. Furthermore, let $F$ be the number of coordinates $k \in \lbrace 1,2,\ldots,n \rbrace \setminus \lbrace i, j \rbrace $, such that $x_k=y_k = 0$.
\begin{itemize}
\item If $x_i=y_i=u_j=v_j = 0$ or $x_i=y_i=u_j=v_j = 1$, then since $d(e, 11\ldots11)=d(f,11\ldots11)$, it follows that $E=F$. Let $z \in B_n$ be a vertex with $z_i = 0$. Thus, ($d(e,z) = E +1$ and $d(f,z) = F$) or ($d(e,z) = E$ and $d(f,z) = F+1$). Therefore, $d(e,z) \neq d(f,z)$, a contradiction.

\item If ($x_i=y_i = 0$ and $u_j = v_j =1$) or ($x_i=y_i = 1$ and $u_j = v_j = 0$), then let $z \in B_n$ be a vertex with $z_i = 0$. Since $d(e,z)=d(f,z)$, it follows that $E=F$. Thus, ($d(e,11\ldots11) = E +1$ and $d(f,11\ldots11) = F$) or ($d(e,11\ldots11) = E$ and $d(f,11\ldots11) = F +1$). Therefore, $d(e,11\ldots11) \neq d(f,11\ldots11)$, a contradiction.
\end{itemize}
\item $i = j$: \\
In this case, let $B_{n-1}$ be a metric generator for the hypercube $Q_{n-1}$ as proved in \cite{erdos}. Let $E$ be a vertex in $Q_{n-1}$, obtained by deleting $i$-th coordinate in the vertex $u$, and let $F$ be a vertex in $Q_{n-1}$, obtained by deleting $i$-th coordinate in the vertex $x$. Since the edges $e$ and $f$ are different, it follows $E\neq F$. Also, for every $w \in B_{n-1}$ there is some $z_w \in B_n$, such that $w$ is obtained from $z_w$ by deleting the $i$-th coordinate. Since $d(E,w) = d(e,z_w) = d(f,z_w) = d(F,w)$ for every $w \in B_{n-1}$, we have $d(E,w) = d(F,w)$ for every $w \in B_{n-1}$. Since $B_{n-1}$ is a metric generator in $Q_{n-1}$, this is a contradiction.
\end{enumerate}
We have proved that for every two distinct edges $e$ and $f$ in the hypercube $Q_n$, it holds that there is $z \in B_n$ such that $d(e,z) \neq d(f,z)$. Therefore, $B_n$ is an edge metric generator and the bound is obtained.
\end{proof}

\section{Conclusion}

In this article we have introduced and initiated the study of a new variant of metric dimension in connected graphs concerning uniquely identifying the edges of the graph, namely the edge metric dimension. We have given some realization results on this new parameter in connection with the standard metric dimension and also, some comparison between both mentioned parameters. In addition, we have proved that computing the edge metric dimension of connected graphs is NP-hard throughout a polynomial reduction from the 3-SAT problem. We have also computed the value of the edge metric dimension of several graph families or bounded its value in some other cases. As the consequence of the study, several
questions which are of interest in order to continue the research in this direction could be posted. These are the following ones.
\begin{itemize}
\item Is it possible to completely settle the realization result concerning the triplet $r,t,n$ already mentioned in Subsection \ref{subsect-realiz}?
\item Is there a bound of $dim(G)$ in terms of $edim(G)$ or viceversa?
\item Can you characterize the families of graphs $G$ achieving the equality $edim(G)=dim(G)$?
\item Are there any other families of graph (different from the torus graph $C_{4r}\Box C_{4t}$) such that $edim(G)<dim(G)$?
\item The problem of computing the standard metric dimension of graph is proved to be NP-hard when restricted to planar graphs and it turns out polynomial for the case of outerplanar graphs (see \cite{diaz}). In this sense: Is it also true some similar result for the case of edge metric dimension?
\item Can you characterize the family of graphs $G$ of order $n$ satisfying that $edim(G)=n-1$?
\end{itemize}

\end{document}